\documentclass{amsart}
\pdfoutput=1
\usepackage[utf8]{inputenc}
\usepackage[T1]{fontenc}
\usepackage[english]{babel}
\usepackage[babel=true]{microtype}
\usepackage{amsmath,amsthm,amssymb,bookmark,float,tikz}
\usepackage[backend=biber,style=alphabetic,giveninits]{biblatex}
\DeclareNameAlias{default}{family-given}
\addbibresource{WCI-combinatorics.bib}

\theoremstyle{plain}
\newtheorem{theorem}{Theorem}[section]
\newtheorem{lemma}[theorem]{Lemma}
\newtheorem{corollary}[theorem]{Corollary}
\newtheorem{conjecture}[theorem]{Conjecture}
\newtheorem{proposition}[theorem]{Proposition}
\newtheorem{principle}[theorem]{Principle}

\theoremstyle{definition}
\newtheorem{definition}[theorem]{Definition}
\newtheorem{example}[theorem]{Example}
\newtheorem{notation}[theorem]{Notation}
\newtheorem{problem}[theorem]{Problem}

\theoremstyle{remark}
\newtheorem{remark}[theorem]{Remark}

\newtheorem*{acknowledgements}{Acknowledgements}
\newtheorem*{structure}{Structure of the paper}

\title[On the existence of nef-partitions for smooth well-formed Fano WCI]{On the existence of nef-partitions for smooth \\ well-formed Fano weighted complete intersections}
\author{Mikhail Ovcharenko}
\address{Steklov Mathematical Institute of Russian Academy of Sciences, 8 Gubkina street, Moscow, Russia}
\email{ovcharenko@mi-ras.ru}

\DeclareMathOperator{\Bs}{Bs}
\DeclareMathOperator{\codim}{codim}
\DeclareMathOperator*{\lcm}{lcm}
\DeclareMathOperator{\Facets}{Facets}
\DeclareMathOperator{\im}{im}

\DeclareMathOperator{\Proj}{Proj}
\DeclareMathOperator{\red}{red}
\DeclareMathOperator{\Spec}{Spec}
\DeclareMathOperator{\Sing}{Sing}
\DeclareMathOperator{\Vertices}{Vert}
\DeclareMathOperator{\vertices}{vert}

\setlength{\textwidth}{\paperwidth}
\addtolength{\textwidth}{-2in}
\calclayout

\begin{document}

\begin{abstract}
  A nef-partition for a weighted complete intersection is a combinatorial structure on its weights and degrees which is important for Mirror Symmetry. It is known that nef-partitions exist for smooth well-formed Fano weighted complete intersections of small dimension or codimension, and that in these cases they are \emph{strong} in the sense that they can be realized as fibers of morphisms of \emph{weighted simplicial complexes}, i.e., finite abstract simplicial complexes equipped with a weight function.

  It was conjectured that this approach can be extended to the case of arbitrary codimension. We show that in the case of any codimension greater than 3 strong nef-partitions may not exist, and provide a sufficient combinatorial condition for existence of a strong nef-partition in terms of weights.

  We also show that the combinatorics of smooth well-formed weighted complete intersections can be arbitrarily complicated from the point of view of simplicial geometry.
\end{abstract}

\maketitle

\section{Introduction}

Complete intersections in weighted projective spaces, also known as \emph{weighted complete intersections}, form a large class of varieties allowing one to carry out various explicit computations (see Section~\ref{section:preliminaries} for a brief review). In this paper we consider the combinatorics of smooth \emph{well-formed} weighted complete intersections (see Definition~\ref{definition:WF-subscheme}). We are especially interested in \emph{Fano varieties}, i.e., in complete varieties whose anticanonical class is ample. The property of a weighted complete intersection to be smooth and well-formed imposes strong restrictions on a possible choice of weights and degrees. For example, it is possible to explicitly classify smooth well-formed Fano weighted complete intersections (see~\cites{przyjalkowski/bounds,przyjalkowski/codimension,ovcharenko/classification}).

In the paper we study the existence of so-called \emph{nef-partitions}, which are important for Mirror Symmetry. Everything is over the field \(\mathbb{C}\). Our approach is mainly combinatorial, hence most of the statements hold in much more general case. Throughout the paper \(a^{(t)}\) stands for \(a \in \mathbb{Z}_{> 0}\) repeated \(t\) times.

\begin{definition}[see~{\cite[Definition~1.1]{przyjalkowski/nef}},~{\cite{givental/mirror}},~{\cite{batyrev/degenerations}}]
  Let \(X \subset \mathbb{P}(a_0, \ldots, a_N)\) be a smooth well-formed Fano weighted complete intersection of multidegree \((d_1, \ldots, d_c)\). A \emph{nef-partition} for \(X\) is a splitting
  \[
    \{0, \ldots, N\} = I_0 \sqcup \ldots \sqcup I_c, \quad \sum_{i \in I_j} a_i = d_j, \quad j = 1, \ldots, c.
  \]
  A nef-partition is called \emph{nice} if there exists an index \(i \in I_0\) such that \(a_i = 1\).
\end{definition}

Recall that Mirror Symmetry corresponds to a Fano variety a certain one-dimensional family of Calabi--Yau varieties called its \emph{Landau--Ginzburg model}. Elements of such family should be mirror dual to anticanonical sections of the Fano variety. This notion clearly depends on the definition of a ``mirror dual'' object. The existence of a nef-partition allows one to construct a Landau--Ginzburg model for a smooth well-formed Fano weighted complete intersection in the sense of Givental (see~\cite{givental/mirror} and~\cite[Definition~2.1]{przyjalkowski/nef}).

If a nef-partition is nice, then one can birationally represent Givental's Landau--Ginzburg model by a complex torus equipped with a regular function on it (see~\cite[Theorem~9]{przyjalkowski/hori}). Moreover, the Newton polytope of the corresponding Laurent polynomial is always realized as the fan polytope of a toric degeneration of the smooth well-formed Fano weighted complete intersection (see~\cite[Theorem~2.2]{ilten/degenerations}).

All this motivates the following combinatorial conjecture.

\begin{conjecture}[see~{\cite[Conjecture~1.5]{przyjalkowski/nef}}]\label{conjecture:main}
  A smooth well-formed Fano weighted complete intersection admits a nice nef-partition.
\end{conjecture}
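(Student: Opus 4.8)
The plan is to strip away the geometry and reduce Conjecture~\ref{conjecture:main} to a single combinatorial feasibility statement about the multiset of weights. Write \(A = \sum_{i=0}^N a_i\) and \(D = \sum_{j=1}^c d_j\); since \(X\) is Fano, its Fano index \(A - D\) is positive. A splitting \(\{0,\ldots,N\} = I_0 \sqcup \ldots \sqcup I_c\) with \(\sum_{i \in I_j} a_i = d_j\) for \(j \geq 1\) automatically forces \(\sum_{i \in I_0} a_i = A - D\), so a nef-partition is exactly a partition of the multiset \(\{a_0, \ldots, a_N\}\) into blocks of prescribed sums \(d_1, \ldots, d_c\) together with a remainder block of total weight \(A - D\); it is nice precisely when that remainder block can be chosen to contain an index \(i\) with \(a_i = 1\). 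Thus the whole problem becomes an exact-packing question for the weights, and I would attack it purely combinatorially, invoking the classification of smooth well-formed Fano weighted complete intersections (see~\cites{przyjalkowski/bounds,przyjalkowski/codimension,ovcharenko/classification}) only to supply the structural constraints on \((a_i)\) and \((d_j)\) that make the packing feasible.

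Next I would harvest those constraints. The quasi-smoothness and well-formedness conditions tie each large weight to a degree: for every \(i\) with \(a_i > 1\) there should be a degree \(d_j\) for which \(a_i\) participates in a monomial of weighted degree \(d_j\), so that \(a_i \leq d_j\) and \(d_j - a_i\) is again a sum of available weights. In high codimension these conditions, combined with the Fano inequality, also force a large supply of weight-\(1\) variables. The idea is then a \emph{seed-and-fill} construction: first assign each weight \(a_i > 1\) to some compatible degree bin, and afterwards top up every bin to its exact capacity \(d_j\) using the weight-\(1\) variables, which behave as perfectly divisible filler. Reserving one weight-\(1\) index for the remainder block \(I_0\) — possible because the Fano index is at least \(1\) and a unit weight is present — would secure niceness for free.

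The technical heart is the simultaneous disjointness of the bins, which I would phrase as an integral feasibility problem: a bipartite assignment of the large weights to degree bins, subject to the per-bin residual capacities being non-negative and jointly fillable by the pool of \(1\)'s. I expect to verify this through a Hall- or max-flow-type condition, reading the required inequalities off the structural lemmas of the previous step; if that condition holds one obtains an exact packing, hence the desired nice nef-partition.

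The step I expect to be the genuine obstacle is precisely this simultaneous assignment in large codimension. This is exactly the regime in which, as the paper shows, \emph{strong} nef-partitions — those realizable as fibers of a morphism of weighted simplicial complexes — can fail to exist; so one cannot hope to produce the packing from a clean simplicial model and must instead exploit the extra freedom that a merely nice nef-partition allows, using the weight-\(1\) variables to repair the collisions that obstruct a simplicial solution. Proving that the structural constraints always leave enough of this freedom — that feasibility survives even when the simplicial picture does not — is the crux, and is presumably why the statement is still only conjectural in full generality.
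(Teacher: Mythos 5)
The statement you are trying to prove is a \emph{conjecture}: the paper does not prove it, and it remains open in general (it is known only in codimension \(1\) and \(2\), by Theorem~\ref{theorem:nef}, and in low dimension). Your proposal does not close it either, and you effectively concede this yourself in the final paragraph. The reduction in your first step is correct but content-free: a nef-partition \emph{is by definition} an exact packing of the weight multiset into blocks of sums \(d_1, \ldots, d_c\) plus a remainder of sum equal to the Fano index, so nothing has been gained beyond restating the problem. Everything then rests on the ``Hall- or max-flow-type condition'' for the simultaneous assignment of large weights to degree bins, and you neither formulate this condition precisely nor prove it follows from quasi-smoothness, well-formedness, and the Fano inequality. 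That missing step \emph{is} the conjecture; there is no proof here, only a plausible strategy with its crux deferred.

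It is worth noting how your sketch relates to what the paper actually proves. Your ``seed-and-fill'' idea is essentially the mechanism of the paper's proof of Theorem~\ref{theorem:main}: there the seed assignment is produced by Proposition~\ref{proposition:poset-map} (via strict regularity, Proposition~\ref{proposition:WCI-smooth-WF-combinatorics}), the fibers \(\chi_{\vertices}^{-1}(j)\) are placed in the bins, non-negativity of the residuals \(\Delta_j = d_j - \sum_{i \in \chi_{\vertices}^{-1}(j)} a_i\) is guaranteed by \cite[Lemma~6.1]{pizzato/nonvanishing}, and the weight-\(1\) variables fill the residuals exactly, with \(i_X\) of them left for \(I_0\). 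But this only works under the hypothesis that every non-divisible subset is strongly non-divisible; it produces a \emph{strong} nef-partition, where each seed weight divides its bin's degree. Example~\ref{example:no-strong-nef-partition} shows that without this hypothesis the divisibility-respecting assignment can fail for every choice of seeds (for \(\rho = (1^{(61+t)}, 6, 10, 15)\), \(\mu = (16, 21, 25, 30)\) no strong nef-partition exists), even though a nice nef-partition does exist there, e.g.\ \(16 = 6 + 10 \cdot 1\), \(21 = 15 + 6 \cdot 1\), \(25 = 10 + 15 \cdot 1\), \(30 = 30 \cdot 1\). So the extra freedom you point to (seeds need not divide their bins) is real, but exploiting it systematically — proving your feasibility condition always holds for smooth well-formed Fano weighted complete intersections of arbitrary codimension — is exactly the open problem, and no argument in your proposal addresses it.
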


This conjecture is known to be true for Fano weighted complete intersections of small dimension or codimension (see~\cites{przyjalkowski/bounds,przyjalkowski/nef}). In these cases a nef-partition satisfies the following additional property.

\begin{definition}[see~{\cite[Remark~4.2]{przyjalkowski/singular}}]
  Let \(X \subset \mathbb{P}(a_0, \ldots, a_N)\) be a smooth well-formed Fano weighted complete intersection of multidegree \((d_1, \ldots, d_c)\). A nef-partition \(I_0 \sqcup \ldots \sqcup I_c\) is called \emph{strong} if
  \[
    a_i = 1 \text{ for any } i \in I_0; \quad
    a_i \mid d_j \text{ for any } j = 1, \ldots, c \text{ and } i \in I_j.
  \]
\end{definition}

\begin{theorem}[see~{\cite[Theorems~1,2]{przyjalkowski/nef}}]\label{theorem:nef}
  Let \(X \subset \mathbb{P}(a_0, \ldots, a_N)\) be a smooth well-formed Fano weighted complete intersection of codimension 1 or 2. Then there exists a strong nef-partition for \(X\).
\end{theorem}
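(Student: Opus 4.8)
The plan is to peel off the two trivial sources of flexibility — the weights equal to $1$ and the linear-cone directions — and thereby reduce the statement to a small packing problem that the scarcity of degrees makes tractable. First I would arrange that $X$ is not an intersection with a linear cone: if some $d_j$ equals a single weight $a_i$, the equation $f_j$ eliminates the variable $x_i$ and exhibits $X$ as a weighted complete intersection of codimension $c-1$ in a weighted projective space with one fewer variable. A nef-partition of this smaller intersection lifts to one of $X$ by adjoining the singleton block $I_j = \{i\}$, which is strong because $a_i = d_j$; so an induction on $c$ (with base $\mathbb{P}(1^{(N+1)}) = \mathbb{P}^N$) reduces everything to the non-degenerate case. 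In that case I would invoke the divisibility property of smooth well-formed complete intersections: for a weight $a_i > 1$ the $i$-th coordinate point lies on $\Sing \mathbb{P}(a_0, \ldots, a_N)$, so smoothness of $X$ forces this point off $X$, which means some $f_j$ contains the pure power $x_i^{d_j / a_i}$ and hence $a_i \mid d_j$. Thus every non-unit weight can legitimately be assigned to at least one block, and the divisibility clause of a strong nef-partition is attainable; the real content is matching the sums.

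The second reduction records that padding by units is free. Since $I_0$ may contain only weights equal to $1$, every weight exceeding $1$ must be assigned to some block $I_j$ with $j \ge 1$. Write $S = \sum_{a_i > 1} a_i$ and let $u$ be the number of unit weights, so that the Fano inequality reads $S + u = \sum_i a_i > \sum_j d_j$. Suppose the non-unit weights are distributed among $I_1, \ldots, I_c$ with block sums $s_j \le d_j$ and with each weight dividing the degree of its block. Then one tops up block $j$ to exactly $d_j$ using $d_j - s_j$ unit weights; these consume $\sum_j d_j - S < u$ units in total, leaving a surplus of $\sum_i a_i - \sum_j d_j \ge 1$ units to fill a non-empty $I_0$. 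This simultaneously produces the strong nef-partition and shows it is nice. Consequently the theorem collapses to a single combinatorial claim: the non-unit weights admit a partition into blocks so that each divides its block's degree and no block sum exceeds its degree.

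For codimension $1$ there is only one block, so divisibility is automatic — each non-unit weight divides the unique degree $d$ — and the sole inequality to check is $S = \sum_{a_i > 1} a_i \le d$. I would deduce this from smoothness by intersecting $X$ with the coordinate strata of $\Sing \mathbb{P}(a_0, \ldots, a_N)$ cut out by the unit coordinates: were $S$ to exceed $d$, a dimension count would force $X$ to meet a positive-dimensional singular stratum, contradicting smoothness. With $S \le d$ in hand the block $I_1$ consists of all non-unit weights together with $d - S$ units, and the construction above finishes the case.

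The substance is codimension $2$, with blocks of degrees $d_1 \le d_2$. Each non-unit weight divides $d_1$ only, $d_2$ only, or both; weights of the first two kinds are forced into their unique admissible block, while those dividing both necessarily divide $\gcd(d_1, d_2)$ and may go either way. The danger, and the step I expect to be the main obstacle, is overflow: even though the global bound $S \le d_1 + d_2$ holds by the same stratum argument, the weights forced into a single block could by themselves exceed its degree. I would rule this out by showing, again from smoothness and well-formedness, that the weights dividing only $d_1$ already sum to at most $d_1$ (and likewise for $d_2$); the remaining flexible weights, all multiples of divisors of $\gcd(d_1, d_2)$, are then fitted into the residual capacities by a greedy argument whose feasibility rests on this divisibility alignment together with $S \le d_1 + d_2$. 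It is precisely this two-block overflow control that exploits having so few degrees, which foreshadows why the method is expected to fail once the codimension grows.
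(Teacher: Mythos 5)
First, a contextual point: the paper you were given does not prove this theorem at all --- it is quoted from~\cite{przyjalkowski/nef}, where the codimension-2 case occupies most of that paper and the partition is ultimately produced as fibers of a weighted simplicial map \(\mathcal{S}(\rho) \rightarrow \mathcal{S}(\mu)\). Your preliminary reductions are correct and standard: removing intersections with linear cones (with the caveat that well-formedness is a property of the embedding, not of the abstract variety, so one must check the lower-codimension model is again smooth \emph{and} well-formed); the divisibility \(a_i \mid d_j\) for some \(j\), obtained because the coordinate point \(P_i\) lies in \(\Sing(\mathbb{P}(\rho))\) when \(a_i > 1\); and the padding of blocks by unit weights via the Fano inequality, which is literally how the present paper finishes its own proof of Theorem~\ref{theorem:main}. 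So you have correctly reduced the theorem to a packing problem for the non-unit weights. The gaps are precisely in the two places where that packing problem gets solved.

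In codimension 1, the mechanism you state for \(\sum_{a_i > 1} a_i \le d\) does not work as described: the inequality \(S > d\) does not by itself produce a singular stratum meeting \(X\). The correct argument is structural and unconditional: a single equation cannot avoid any positive-dimensional coordinate stratum (a section of an ample sheaf on a positive-dimensional projective variety has a zero), so every singular stratum of \(\mathbb{P}(\rho)\) must be a point, i.e.\ the non-unit weights are \emph{pairwise coprime}; combined with \(a_i \mid d\) this gives \(\prod_{a_i > 1} a_i \mid d\), and sum \(\le\) product for pairwise coprime integers \(\ge 2\). In codimension 2 the gap is fatal: the two claims carrying all the content --- that the weights dividing only \(d_1\) sum to at most \(d_1\), and that a greedy assignment of the weights dividing \(\gcd(d_1, d_2)\) succeeds --- are asserted, not proved. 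The first claim is true, but its proof needs the arithmetic input you never isolate: if two weights have \(\gcd = \delta > 1\), then \emph{both} \(d_1\) and \(d_2\) must be non-negative integral combinations of them (two equations must avoid that one-dimensional singular stratum, cf.\ Proposition~\ref{proposition:WCI-smooth-WF-combinatorics}), hence \(\delta \mid \gcd(d_1, d_2)\); it follows that the weights dividing only \(d_1\) form a \emph{strongly non-divisible} set in the sense of Definition~\ref{definition:divisible-subset}, and one concludes via the inequality \(\sum a_i \le \lcm a_i\) of~\cite[Lemma~6.1]{pizzato/nonvanishing}. The second claim is a genuine two-bin packing problem, and greedy packing can fail even when total capacity suffices (try fitting items \(3, 3\) into bins of capacities \(5\) and \(1\)); making it work requires exploiting the divisibility structure of the flexible weights, and this case analysis is exactly the substance of Theorem~2 of~\cite{przyjalkowski/nef}. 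As written, your proposal reduces the theorem to unproven statements rather than proving it.
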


Strong nef-partitions are not only convenient from the combinatorial point of view, but they also appear naturally in Przyjalkowski's construction of \emph{log Calabi--Yau compactifications} of a Landau--Ginzburg model for a smooth well-formed Fano weighted complete intersection (see~\cite[Remark~4.2]{przyjalkowski/singular}).

The main observation of the proof of Theorem~\ref{theorem:nef} is that the singular locus \(\Sing(\mathbb{P}(\rho))\) of a weighted projective space with weights \(\rho = (a_0, \ldots, a_N)\) can be interpreted as a \emph{weighted simplicial complex} \(\mathcal{S}(\rho)\), i.e., an abstract simplicial complex equipped with a weight function (we refer the reader to Section~\ref{section:WSC} for definitions and details). Moreover, base loci of linear systems \(\vert \mathcal{O}_{\mathbb{P}(\rho)}(d) \vert\) can also be interpreted in this way (see Lemma~\ref{lemma:face-ring-proj} for a precise statement). This leads us to the following problem.

\begin{problem}[see~{\cite[Section~4]{przyjalkowski/nef}}]\label{problem:nef}
  Could one construct a nef-partition for a smooth well-formed Fano weighted complete intersection of multidegree \(\mu\) as fibers of a weighted simplicial map \(\mathcal{S}(\rho) \rightarrow \mathcal{S}(\mu)\)?
\end{problem}

\begin{remark}
  Note that any such nef-partition would be strong (see Definition~\ref{definition:WSC} and Notation~\ref{notation:S-complex}). 
\end{remark}

The answer to Problem~\ref{problem:nef} is negative: in the case of any codimension greater than 3 there exist smooth well-formed Fano weighted complete intersections without a strong nef-partition.

\begin{example}\label{example:no-strong-nef-partition}
  Put \(\rho = (1^{(61 + t)}, 6, 10, 15)\) for any \(t > 0\). Let \(X \subset \mathbb{P}(\rho)\)  be a general weighted complete intersection of multidegree \((16, 21, 25, 30)\) (see~\cite[Lemma~2.18]{ovcharenko/classification}). By Proposition~\ref{proposition:WCI-smooth-WF-criterion} the variety \(X\) is smooth and well-formed if and only if it is \emph{quasi-smooth} (see Definition~\ref{definition:WPV-quasismooth}), and \(X \cap \Sing(\mathbb{P}(\rho)) = \varnothing\). The former condition holds by~\cite[Proposition~3.1]{pizzato/nonvanishing}, and the latter one can be checked directly using Lemma~\ref{lemma:WPS-WF-singular-locus}. Moreover, \(X\) is Fano of Fano index \(t\) for any \(t > 0\) (see~{\cite[Theorem~3.3.4]{dolgachev/weighted}},~{\cite[\nopp 6.14]{ianofletcher/weighted}}). Nonetheless, \(X\) does not have a strong nef-partition.

  This example can be generalized to arbitrary codimension as a series of weighted complete intersections \(X \subset \mathbb{P}(1^{(61 + t + 2m)}, 6, 10, 15)\) of multidegree \((2^{(m)}, 16, 21, 25, 30)\) (see~\cite{ovcharenko/classification} for further details).
\end{example}

Our main result is a sufficient condition for existence of a strong nef-partition only in terms of weights, without any assumptions on multidegree of a weighted complete intersection.

\begin{definition}\label{definition:divisible-subset}
  For any tuple \(\rho = (a_0, \ldots, a_N) \in \mathbb{Z}_{> 0}^{N + 1}\) we refer to a subset \(I \subset \{0, \ldots, N\}\) as
  \begin{itemize}
  \item \emph{non-divisible} if we have \(a_i \nmid a_j\) for any \(i, j \in I\) with \(i \neq j\);
  \item \emph{strongly non-divisible} if \(a_k\) does not divide \(\lcm_{i, j \in I} (\gcd_{i \neq j}(a_i, a_j))\) for any \(k \in I\).
  \end{itemize}
\end{definition}

Note that any strongly non-divisible subset is non-divisible.

\begin{example}\label{example:non-divisible}
  On the one hand, let \((q_1, \ldots, q_l) \in \mathbb{Z}_{> 1}^l\) be pairwise coprime integers, and \((m_1, \ldots, m_l) \in \mathbb{Z}_{> 0}^l\) be any tuple. Put \(\rho = (q_1^{(m_1)}, \ldots, q_l^{(m_l)})\). Then any non-divisible subset for \(\rho\) is strongly non-divisible.

  On the other hand, put \(\rho = (1^{(t + 1)}, 6, 10, 15)\) for any \(t \geqslant 0\). Then the subset \(\{t + 1, t + 2, t + 3\}\) is non-divisible, but not strongly non-divisible.
\end{example}

If the whole weight tuple \(\rho\) is strongly non-divisible, it is not hard to derive the existence of a strong nef-partition from~\cite[Lemma~6.1]{pizzato/nonvanishing} and Proposition~\ref{proposition:WCI-smooth-WF-combinatorics}. If there exists a non-divisible subset for the tuple \(\rho\) which is not strongly non-divisible, there could exist smooth well-formed Fano weighted complete intersections without a strong nef-partition (see Examples~\ref{example:no-strong-nef-partition} and~\ref{example:non-divisible}). We claim that if any non-divisible subset for \(\rho\) is strongly non-divisible, then a strong nef-partition exists for any possible multidegree.

\begin{theorem}[see the proof in Section~\ref{section:proof}]\label{theorem:main}
  Let \(X \subset \mathbb{P}(a_0, \ldots, a_N)\) be a smooth well-formed Fano weighted complete intersection which is not an intersection with a linear cone. Assume that any non-divisible subset \(I \subset \{0, \ldots, N\}\) is strongly non-divisible. Then \(X\) admits an explicitly constructed strong nef-partition.
\end{theorem}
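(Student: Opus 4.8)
The plan is to reduce the existence of a strong nef-partition to a purely combinatorial packing problem, and then to use the hypothesis on non-divisible subsets to certify that this packing can always be carried out. First I would dispose of the weights equal to $1$ together with the Fano bookkeeping. In any strong nef-partition the part $I_0$ may contain only indices of weight $1$, while each $I_j$ with $j \geqslant 1$ consists of indices whose weights divide $d_j$ and sum to exactly $d_j$. Writing $S = \sum_{a_i > 1} a_i$ and letting $n_1$ be the number of indices of weight $1$, the identity $\sum_i a_i = n_1 + S$ together with the Fano inequality $\sum_i a_i > \sum_j d_j$ gives $\sum_j d_j - S < n_1$. Hence, once the indices of weight $> 1$ have been distributed among $I_1, \ldots, I_c$ so that the weights placed in $I_j$ divide $d_j$ and have sum $s_j \leqslant d_j$, there are automatically enough weight-$1$ indices to raise each $I_j$ from $s_j$ to $d_j$, and the leftover (necessarily weight-$1$) indices form $I_0$. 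Thus the whole problem is equivalent to distributing the indices $i$ with $a_i > 1$ into bins $1, \ldots, c$ so that $a_i \mid d_{\phi(i)}$ and $\sum_{\phi(i) = j} a_i \leqslant d_j$ for every $j$.

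Next I would record the constraints coming from geometry. By Proposition~\ref{proposition:WCI-smooth-WF-combinatorics} the fact that $X$ is smooth, well-formed, Fano and not an intersection with a linear cone translates into divisibility and counting conditions relating $\rho$ and $(d_1, \ldots, d_c)$; in particular I expect every weight value $v > 1$ to admit degrees divisible by $v$, with enough total capacity among them to host all copies of $v$. This is precisely the regime in which \cite[Lemma~6.1]{pizzato/nonvanishing} applies: when the whole tuple $\rho$ is strongly non-divisible it already produces the required distribution, which is the base case announced before the theorem. The role of the general statement is to drop the implicit requirement there that all weights be distinct and exceed $1$, replacing it by the weaker assumption that every non-divisible subset is strongly non-divisible.

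The heart of the argument is then a feasibility check for this bin-packing problem. I would order the distinct weight values $> 1$ by divisibility and place the copies of each value into its admissible degrees $D(v) = \{\, j : v \mid d_j \,\}$, respecting residual capacities. Along any divisibility chain the sets $D(v)$ are nested, since $v \mid w$ forces $D(w) \subseteq D(v)$, so chains create no obstruction; the only way placement can fail is that some family of pairwise incomparable weight values $\{v_1, \ldots, v_s\}$ — a non-divisible subset in the sense of Definition~\ref{definition:divisible-subset} — together with its multiplicities is squeezed into a set of degrees of insufficient total capacity, exactly as happens for $\{6, 10, 15\}$ against the degrees $(16, 21, 25, 30)$ in Example~\ref{example:no-strong-nef-partition}. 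The key claim, and the main obstacle, is that our hypothesis forbids precisely this concentration: I would show that a capacity shortfall for such a family, combined with the smooth well-formedness conditions of Proposition~\ref{proposition:WCI-smooth-WF-combinatorics}, forces every $v_t$ to divide $\lcm_{p \neq q}\big(\gcd(v_p, v_q)\big)$, exhibiting $\{v_1, \ldots, v_s\}$ as a non-divisible subset that is not strongly non-divisible and contradicting the assumption. Granting this, no admissible family is ever overloaded, the placement terminates with every weight of size $> 1$ assigned, and the top-up by weight-$1$ indices from the first paragraph produces the $I_j$; the construction is explicit because the divisibility ordering, the placement, and the top-up all are.

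The step I expect to be genuinely delicate is this last claim: extracting from a mere capacity shortfall the precise arithmetic relation $v_t \mid \lcm_{p \neq q}\big(\gcd(v_p, v_q)\big)$. This needs the smooth well-formedness conditions used not just to guarantee one admissible degree per weight, but to pin down which degrees are forced to be divisible by the various $\gcd(v_p, v_q)$, and then to convert the numerical inequality $\sum_t m'_t v_t > \sum_{j \in \bigcup_t D(v_t)} d_j$ into the stated divisibility. Controlling repeated weight values, and the simultaneous interaction of several overlapping antichains, is where the bookkeeping will be heaviest, and it is the point at which the hypothesis of Theorem~\ref{theorem:main} must be invoked in full strength rather than merely for a single triple.
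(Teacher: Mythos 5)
Your opening reduction is sound and matches the skeleton of the paper's argument: distribute the indices with \(a_i > 1\) into bins so that \(a_i \mid d_{\phi(i)}\) and \(\sum_{\phi(i) = j} a_i \leqslant d_j\), then use the Fano inequality \(\sum_i a_i > \sum_j d_j\) (adjunction) to top up each bin with weight-one indices and put the leftovers in \(I_0\). The gap is in the heart of the proof: the construction of the assignment \(\phi\) itself. You propose a greedy placement along the divisibility order together with an obstruction analysis, and the whole argument rests on two statements you do not prove. First, the dichotomy ``placement can only fail because some antichain of weight values is squeezed into degrees of insufficient total capacity'' is a Hall-type feasibility criterion for a packing problem with simultaneous divisibility and capacity constraints; you give no argument for it, and the interaction of overlapping antichains, chains, and repeated weight values (dismissed with ``chains create no obstruction'') is exactly where such criteria tend to fail. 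Second, even granting that dichotomy, converting a numerical shortfall \(\sum_t m'_t v_t > \sum_{j} d_j\) into the arithmetic relation \(v_t \mid \lcm_{p \neq q}(\gcd(v_p, v_q))\) is a separate unproven step, which you yourself flag as the main obstacle. So the proposal identifies the right ingredients but leaves the decisive claim as a conjecture.

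It is worth noting that the paper closes this gap by a route that makes capacity counting unnecessary, so your ``delicate claim'' never has to be faced. Proposition~\ref{proposition:poset-map}, proved by Noetherian induction on the values \(\im(\Phi_{\mathcal{S}(\rho)})\) ordered by divisibility, uses strict regularity (Proposition~\ref{proposition:WCI-smooth-WF-combinatorics}) to produce compatible \emph{admissible injections}: for each value \(b\) the whole divisibility class \(\{i : b \mid a_i\}\) is injected into the degrees, so the number of available bins is supplied by the cardinality clause \(\vert J \vert = \vert I \vert\) of strict regularity, not by summing degree capacities. The resulting vertex map \(\chi_{\vertices}\) has fibers that are non-divisible subsets and satisfy \(a_i \mid d_j\) for \(i \in \chi_{\vertices}^{-1}(j)\). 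The bin bound is then automatic and purely local: by the hypothesis of the theorem each fiber is strongly non-divisible, so \cite[Lemma~6.1]{pizzato/nonvanishing} gives \(\lcm_{i \in \chi_{\vertices}^{-1}(j)} a_i \geqslant \sum_{i \in \chi_{\vertices}^{-1}(j)} a_i\), while the divisibilities \(a_i \mid d_j\) force \(d_j \geqslant \lcm_{i \in \chi_{\vertices}^{-1}(j)} a_i\); hence \(\Delta_j = d_j - \sum_{i \in \chi_{\vertices}^{-1}(j)} a_i \geqslant 0\) with no global counting at all. Your final top-up paragraph then coincides with the paper's adjunction step. In short: what is missing from your proposal is precisely the consistent, cardinality-preserving construction of the fibers (the Noetherian induction of Proposition~\ref{proposition:poset-map}); once that is in place, strong non-divisibility is used per fiber via Pizzato's lemma, not via a shortfall-to-divisibility implication.
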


\begin{remark}
  In particular, Theorem~\ref{theorem:main} implies that under a certain restrictions on weights the property of a weighted complete intersection to have a strong nef-partition does not depend on multidegree at all, only on its property to be smooth, well-formed, and Fano, which is unexpected.
\end{remark}

\begin{remark}
  For any tuple \(\rho \in \mathbb{Z}_{> 0}^{N + 1}\) the set \(\mathcal{A}(\rho)\) of all non-divisible subsets and the set \(\mathcal{B}(\rho)\) of all strongly non-divisible subsets are finite simplicial complexes on \(\{0, \ldots, N\}\), one being a subcomplex of another. In other words, \((\mathcal{A}(\rho), \mathcal{B}(\rho))\) is a simplicial pair. In Theorem~\ref{theorem:main} we require precisely that \(\mathcal{A}(\rho) = \mathcal{B}(\rho)\), i.e., that the simplicial pair \((\mathcal{A}(\rho), \mathcal{B}(\rho))\) is trivial.
\end{remark}

We show that the combinatorics of a smooth well-formed weighted complete intersection \(X \subset \mathbb{P}(\rho)\) of multidegree \(\mu\) can be arbitrarily complicated from the point of view of simplicial geometry. More precisely, a weighted simplicial map \((\mathcal{S}(\rho), \Phi_{\mathcal{S}(\rho)}) \rightarrow (\mathcal{S}(\mu), \Phi_{\mathcal{S}(\mu)})\) can show the following pathological behavior.

\begin{example}\label{example:ill-defined}
  Let \(X_{\mu} \subset \mathbb{P}(\rho)\) be a smooth well-formed weighted complete intersection of multidegree \(\mu\).
  \begin{enumerate}
  \item There exists a series of examples of smooth well-formed weighted complete intersections such that any weighted simplicial map \((\mathcal{S}(\rho), \Phi_{\mathcal{S}(\rho)}) \rightarrow (\mathcal{S}(\mu), \Phi_{\mathcal{S}(\mu)})\) is a map to a point (see Example~\ref{example:contraction}).
  \item Any finite simplicial complex can be realized as a simplicial complex \(\mathcal{S}(\rho)\) for some tuple \(\rho\) of positive integers (see Proposition~\ref{proposition:realization}). Any map of finite simplicial complexes which does not contract faces can be realized as a weighted simplicial map \((\mathcal{S}(\rho), \Phi_{\mathcal{S}(\rho)}) \rightarrow (\mathcal{S}(\mu), \Phi_{\mathcal{S}(\mu)})\) for some smooth well-formed weighted complete intersection \(X_{\mu} \subset \mathbb{P}(\rho)\) (see Example~\ref{example:realization}).
  \end{enumerate}
\end{example}

\begin{structure}
  This paper is organized as follows. In Section~\ref{section:preliminaries} we briefly remind basic facts about weighted complete intersections. In Section~\ref{section:WSC} we describe our generalization of the approach in~\cite{przyjalkowski/nef}. In Section~\ref{section:proof} we prove Theorem~\ref{theorem:main} and auxiliary statements used in construction of Example~\ref{example:ill-defined}.
\end{structure}

\begin{acknowledgements}
  This work was supported by the Russian Science Foundation under grant no.~19--11--00164, \url{https://rscf.ru/en/project/19-11-00164/}.

  The author is grateful to V.~Przyjalkowski and C.~Shramov for helpful suggestions. We also want to thank the referee for their useful remarks.
\end{acknowledgements}

\section{Preliminaries}\label{section:preliminaries}

Let us remind the basics on weighted projective spaces.

\begin{notation}\label{notation:polynomial-ring}
  Let  \(\rho = (a_0, \ldots, a_N)\) be a tuple of positive integers. We denote by \(R^{\rho} = \mathbb{C}[X_0, \ldots, X_N]\) the polynomial ring over \(\mathbb{C}\) with the following structure of a graded ring:
  \[
    \deg \left ( \prod_{i = 0}^N X_i^{\alpha_i} \right ) = \sum_{i = 0}^N a_i \alpha_i, \quad
    (\alpha_0, \ldots, \alpha_N) \in \mathbb{Z}_{\geqslant 0}^{N + 1}, \quad
    R^{\rho} = \bigoplus_{n = 0}^{\infty} R^{\rho}_n,
  \]
  where \(R^{\rho}_n\) is the \(n\)-th graded component of the ring \(R^{\rho}\).
\end{notation}

\begin{definition}
  Let  \(\rho = (a_0, \ldots, a_N)\) be a tuple of positive integers. We refer to \(\mathbb{P}(\rho) = \Proj(R^{\rho})\) as the \emph{weighted projective space with weights \(\rho\)}.
\end{definition}

\begin{definition}[{\cite[Definition~5.11]{ianofletcher/weighted}}]
  A weighted projective space \(\mathbb{P}(a_0, \ldots, a_N)\) is said to be \emph{well-formed} if
  \(
  \gcd(a_0, \ldots, a_{i - 1}, \widehat{a_i}, a_{i + 1}, \ldots, a_N) = 1
  \)
  for any \(i = 0, \ldots, N\).
\end{definition}

\begin{proposition}[{\cite[\nopp 1.3.1]{dolgachev/weighted}}]
  Any weighted projective space is isomorphic to a well-formed one.
\end{proposition}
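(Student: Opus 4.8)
The plan is to prove the statement by repeatedly applying Veronese-type isomorphisms of graded rings, using induction on the quantity \(\sum_{i=0}^N a_i\) to guarantee termination. Recall the classical fact that for any graded ring \(S\) and any \(d \in \mathbb{Z}_{>0}\) the Veronese subring \(S^{(d)} = \bigoplus_{n \geqslant 0} S_{nd}\), equipped with the grading \((S^{(d)})_n = S_{nd}\), satisfies \(\Proj S \cong \Proj S^{(d)}\). I would exploit this for two elementary reductions of the weight tuple \(\rho = (a_0, \ldots, a_N)\), each of which strictly decreases \(\sum_i a_i\) unless \(\mathbb{P}(\rho)\) is already well-formed.

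First I would dispose of the case \(g := \gcd(a_0, \ldots, a_N) > 1\). Since every monomial of \(R^{\rho}\) then has degree divisible by \(g\), we have \(R^{\rho}_n = 0\) whenever \(g \nmid n\), so \(R^{\rho} = (R^{\rho})^{(g)}\) as a ring, while the induced Veronese grading divides all degrees by \(g\). This exhibits an isomorphism \(\mathbb{P}(a_0, \ldots, a_N) \cong \mathbb{P}(a_0/g, \ldots, a_N/g)\). Hence we may henceforth assume \(\gcd(a_0, \ldots, a_N) = 1\).

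The heart of the argument is the second reduction. Suppose \(\mathbb{P}(\rho)\) is not well-formed, so there is an index \(i\) with \(m := \gcd(a_0, \ldots, \widehat{a_i}, \ldots, a_N) > 1\). I would first note that \(\gcd(m, a_i) = 1\): any common divisor of \(m\) and \(a_i\) would divide every \(a_j\) and hence divide \(\gcd(a_0, \ldots, a_N) = 1\). Now I would compute the Veronese subring \((R^{\rho})^{(m)}\). For a monomial \(\prod_j X_j^{\alpha_j}\) of degree \(\sum_j a_j \alpha_j\), the congruence \(\sum_j a_j \alpha_j \equiv a_i \alpha_i \pmod{m}\) (using \(m \mid a_j\) for \(j \neq i\)) together with \(\gcd(m, a_i) = 1\) shows that this degree is divisible by \(m\) if and only if \(m \mid \alpha_i\). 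Therefore \((R^{\rho})^{(m)}\) is exactly the \(\mathbb{C}\)-subalgebra generated by \(\{X_j : j \neq i\} \cup \{X_i^m\}\), which is again a polynomial ring in \(N + 1\) variables. Under the Veronese grading the generator \(X_j\) has degree \(a_j/m\) for \(j \neq i\), while \(X_i^m\) has degree \(m a_i / m = a_i\). This yields
\[
  \mathbb{P}(a_0, \ldots, a_N) \cong \mathbb{P}(a_0/m, \ldots, a_{i-1}/m, a_i, a_{i+1}/m, \ldots, a_N/m).
\]

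Finally I would close the induction. Each application of the second reduction divides the \(N\) weights \(a_j\) with \(j \neq i\) by \(m > 1\) and leaves \(a_i\) unchanged, so \(\sum_i a_i\) strictly decreases; the first reduction likewise strictly decreases the sum. Since \(\sum_i a_i\) is a positive integer, after finitely many steps we reach a well-formed weighted projective space isomorphic to the original \(\mathbb{P}(\rho)\). The main obstacle is the structural computation in the second reduction — verifying that the Veronese subring \((R^{\rho})^{(m)}\) is once more a polynomial ring with the asserted weights — and this is exactly where the coprimality \(\gcd(m, a_i) = 1\) is indispensable.
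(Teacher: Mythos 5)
Your proof is correct, and it is essentially the same argument as in the cited source: the paper itself gives no proof, deferring to Dolgachev's~1.3.1, whose proof is exactly your two Veronese-type reductions, namely dividing all weights by \(g = \gcd(a_0, \ldots, a_N)\), and identifying \((R^{\rho})^{(m)}\) with a polynomial ring of weights \((a_0/m, \ldots, a_{i-1}/m, a_i, a_{i+1}/m, \ldots, a_N/m)\) using \(\gcd(m, a_i) = 1\). Your termination argument via the strictly decreasing positive integer \(\sum_i a_i\) is a clean way to package the finiteness of the reduction process, and the key structural computation (that the \(m\)-th Veronese subring is again a polynomial ring) is carried out correctly.
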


\begin{definition}\label{definition:coordinate-stratum}
  Let \(\mathbb{P}(\rho) = \Proj(\mathbb{C}[X_0, \ldots, X_N])\) be a weighted projective space with weights \(\rho = (a_0, \ldots, a_N)\). For any subset \(I \subset \{0, \ldots, N\}\) the corresponding \emph{coordinate stratum} is the following closed subvariety:
  \[
    \mathbb{P}(\rho)_I = \Proj(R^{\rho} / \langle \{X_i \mid i \not \in I\} \rangle) \subset \mathbb{P}(\rho).
  \]
\end{definition}

\begin{lemma}[{\cite[\nopp 5.15]{ianofletcher/weighted}}]\label{lemma:WPS-WF-singular-locus}
  Let \(\mathbb{P}(\rho)\) be a well-formed weighted projective space with weights \(\rho = (a_0, \ldots, a_N)\). Then \(\Sing(\mathbb{P}(\rho))\) is the union of coordinate strata
  \[
    \Sing(\mathbb{P}(\rho)) = \bigcup_I \mathbb{P}(\rho)_I
  \]
  over all subsets \(I \subset \{0, \ldots, N\}\) such that \(a_I = \gcd(\{a_i \mid i \in I\}) > 1\). Moreover, for any subset \(I \subset \{0, \ldots, N\}\) a general point of the coordinate stratum \(P \in \mathbb{P}(\rho)_I \subset \mathbb{P}(\rho)\) is a cyclic quotient singularity of index \(a_I\).
\end{lemma}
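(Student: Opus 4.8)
The plan is to realize \(\mathbb{P}(\rho)\) as the geometric quotient \((\mathbb{C}^{N + 1} \setminus \{0\})/\mathbb{C}^*\), where \(\lambda \in \mathbb{C}^*\) acts by \(\lambda \cdot (x_0, \ldots, x_N) = (\lambda^{a_0} x_0, \ldots, \lambda^{a_N} x_N)\), and to read off the singularities from the stabilizers of this action. First I would observe that the stabilizer in \(\mathbb{C}^*\) of a point whose support is \(I = \{i : x_i \neq 0\}\) is exactly the group \(\mu_{a_I}\) of \(a_I\)-th roots of unity, since \(\lambda^{a_i} = 1\) for all \(i \in I\) is equivalent to \(\lambda^{a_I} = 1\) with \(a_I = \gcd_{i \in I} a_i\). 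Working in the affine chart \(U_i = \{x_i \neq 0\}\), one has the classical identification \(U_i \cong \mathbb{A}^N / \mu_{a_i}\), where \(\mu_{a_i}\) acts with weights \((a_j \bmod a_i)_{j \neq i}\); this reduces the whole question to the local theory of cyclic quotient singularities.

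Next I would use a Luna-type slice argument (or analyze the chart above directly) to identify the analytic-local model of \(\mathbb{P}(\rho)\) at a point with support \(I\). The directions along the stratum \(\mathbb{P}(\rho)_I = \mathbb{P}((a_i)_{i \in I})\) are smooth, so the local model splits as \(\mathbb{A}^{|I| - 1} \times \left( \mathbb{A}^{N + 1 - |I|} / \mu_{a_I} \right)\), where \(\mu_{a_I}\) acts on the normal coordinates \((x_j)_{j \notin I}\) with weights \((a_j \bmod a_I)_{j \notin I}\). By the Chevalley--Shephard--Todd theorem, such a cyclic quotient is smooth if and only if \(\mu_{a_I}\) acts as a group generated by quasi-reflections; when the action is faithful and free in codimension one, the quotient is a genuine cyclic quotient singularity of index equal to the order \(a_I\) of the group. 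Thus the point is singular precisely when \(a_I > 1\), provided I can rule out quasi-reflections and a nontrivial kernel.

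The key step -- and the only place where well-formedness enters -- is to show that for every \(I\) the induced action of \(\mu_{a_I}\) on the normal space is faithful and has no quasi-reflections. Suppose some \(\zeta \in \mu_{a_I}\) of order \(d > 1\) acts trivially on all normal coordinates except possibly one, say \(x_{j_0}\). Then \(d \mid a_j\) for every \(j \notin I\) with \(j \neq j_0\), and since \(d \mid a_I\) divides \(a_i\) for all \(i \in I\) as well, we get \(d \mid \gcd(a_0, \ldots, \widehat{a_{j_0}}, \ldots, a_N)\), contradicting well-formedness. Taking the case where \(\zeta\) fixes every normal coordinate gives faithfulness, and the case where it fixes all but one gives the absence of quasi-reflections. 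This is the step I expect to be the crux: it is exactly the gcd-of-all-but-one condition that converts the a priori possibly degenerate cyclic action into a well-formed quotient singularity of the stated index.

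Finally I would assemble the global statement. Since the stabilizer depends only on the support, every point of \(\mathbb{P}(\rho)_I\) with \(a_I > 1\) is singular: its support \(S \subseteq I\) satisfies \(a_S \geqslant a_I > 1\), so the local model at it is singular by the analysis above. Conversely, a point whose support \(S\) has \(a_S = 1\) has trivial stabilizer and is smooth. Hence \(\Sing(\mathbb{P}(\rho))\) is the union of the strata \(\mathbb{P}(\rho)_I\) over all \(I\) with \(a_I > 1\), and the local computation shows that a general point of \(\mathbb{P}(\rho)_I\) -- namely one with support exactly \(I\) -- is a cyclic quotient singularity of index \(a_I\), as claimed.
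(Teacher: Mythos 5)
Your proposal is correct. Note that the paper itself gives no proof of this lemma: it is quoted verbatim from Iano-Fletcher's \emph{Working with weighted complete intersections} (the cited 5.15), so there is no in-paper argument to compare against. Your proof is essentially the standard one underlying that reference: realize \(\mathbb{P}(\rho)\) as a \(\mathbb{C}^*\)-quotient (equivalently, work in the cyclic-quotient charts \(U_i \cong \mathbb{A}^N/\mu_{a_i}\)), identify the stabilizer of a point with support \(I\) as \(\mu_{a_I}\), split off the smooth stratum directions, and use Chevalley--Shephard--Todd; you also correctly isolate the crux, namely that well-formedness (the gcd-of-all-but-one condition) is exactly what rules out quasi-reflections and non-faithfulness of the induced action on the normal space, so that for \(a_I > 1\) the general point of \(\mathbb{P}(\rho)_I\) is a genuine cyclic quotient singularity of index \(a_I\), while points with trivial stabilizer are smooth.
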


One can explicitly describe the base locus of linear systems \(\vert \mathcal{O}_{\mathbb{P}(\rho)}(d) \vert\).

\begin{lemma}[{\cite[Theorem~1.4.1]{dolgachev/weighted}}]\label{lemma:WPS-WF-On-sheaves}
  Let \(\mathbb{P}(\rho) = \Proj(R^{\rho})\) be a well-formed weighted projective space. Then we can identify \(H^0(\mathbb{P}(\rho), \mathcal{O}_{\mathbb{P}(\rho)}(n)) \simeq R^{\rho}_n\) for any \(n \in \mathbb{Z}_{\geqslant 0}\).
\end{lemma}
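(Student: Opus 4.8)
The plan is to compute the global sections directly on the standard affine cover of \(\Proj(R^{\rho})\), exploiting the evident graded homomorphism \(R^{\rho}_n \to H^0(\mathbb{P}(\rho), \mathcal{O}_{\mathbb{P}(\rho)}(n))\) that sends a weight-\(n\) polynomial to its associated section, and showing that this map is bijective. The one subtlety to keep in mind is that \(R^{\rho}\) is \emph{not} generated in degree \(1\), so the textbook statement for standard-graded polynomial rings does not apply verbatim and a dedicated argument is needed. First I would take the cover \(\mathbb{P}(\rho) = \bigcup_i D_+(X_i)\), which is legitimate because the complement of this union is the irrelevant locus and hence empty in \(\Proj\). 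By the definition of the sheaf associated to the shifted module \(R^{\rho}(n)\), the sections over \(D_+(X_i)\) are the weight-\(n\) part \((R^{\rho}_{X_i})_n\) of the localization, and the restriction maps to overlaps are the natural localization maps.

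Next I would assemble a global section as a compatible tuple \((s_i)\) with \(s_i \in (R^{\rho}_{X_i})_n\) agreeing in \((R^{\rho}_{X_i X_j})_n\). Since \(R^{\rho}\) is a domain, all the \(s_i\) coincide as weight-\(n\) elements of the common fraction field, so a global section is precisely an element of \(\bigcap_i R^{\rho}_{X_i}\) of weight \(n\). It therefore suffices to prove the purely algebraic identity
\[
  \bigcap_{i = 0}^{N} R^{\rho}_{X_i} = R^{\rho}
\]
inside \(\mathrm{Frac}(R^{\rho})\), after which passing to the weight-\(n\) graded pieces yields \(H^0(\mathbb{P}(\rho), \mathcal{O}_{\mathbb{P}(\rho)}(n)) \simeq R^{\rho}_n\). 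This identity is an instance of algebraic Hartogs: writing an element of the intersection as \(f/g\) in lowest terms, membership in each \(R^{\rho}_{X_i}\) forces \(g\) to divide a power of \(X_i\); as the \(X_i\) are pairwise non-associate primes in the unique factorization domain \(R^{\rho}\), the denominator \(g\) must be a unit, so \(f/g \in R^{\rho}\). This uses only \(N \geqslant 1\); the case \(N = 0\) is immediate, since well-formedness forces \(a_0 = 1\), whence \(\mathbb{P}(\rho)\) is a reduced point and both sides are \(\mathbb{C}\).

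The main thing I would flag is that this argument never invokes well-formedness, which is mildly surprising given the hypothesis: the genuine content is that the punctured affine cone \(\Spec(R^{\rho}) \setminus \{0\}\) has complement of codimension \(N + 1 \geqslant 2\), so weight-\(n\) functions regular there extend over the origin. Equivalently, and this is the alternative route I would keep in reserve, one can argue via the graded local cohomology exact sequence
\[
  0 \to H^0_{\mathfrak{m}}(R^{\rho}) \to R^{\rho} \to \bigoplus_n H^0(\mathbb{P}(\rho), \mathcal{O}_{\mathbb{P}(\rho)}(n)) \to H^1_{\mathfrak{m}}(R^{\rho}) \to 0,
\]
where \(\mathfrak{m}\) is the irrelevant maximal ideal: since \(R^{\rho}\) is Cohen--Macaulay of dimension \(N + 1 \geqslant 2\), its depth at \(\mathfrak{m}\) is at least \(2\), both outer terms vanish, and the isomorphism \(R^{\rho}_n \simeq H^0(\mathbb{P}(\rho), \mathcal{O}_{\mathbb{P}(\rho)}(n))\) follows in every degree. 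There is no serious obstacle here; the only care needed is the bookkeeping of the grading shift in \(R^{\rho}(n)\) and the harmless low-dimensional edge case.
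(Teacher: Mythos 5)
The paper contains no proof of this lemma for you to be compared against: it is imported verbatim from Dolgachev, with the citation of \cite[Theorem~1.4.1]{dolgachev/weighted} standing in for a proof. So the only question is whether your argument is correct, and it is. The identification \(\Gamma(D_+(X_i), \widetilde{R^{\rho}(n)}) \simeq (R^{\rho}_{X_i})_n\) is valid for an arbitrary graded ring --- this is \cite[Proposition~II.5.11(b)]{hartshorne/geometry}, and degree-one generation is only needed for later statements (invertibility of \(\mathcal{O}(n)\), \(\widetilde{M}(n) \simeq \widetilde{M(n)}\)), which is exactly the subtlety you flag. The sheaf axiom plus injectivity of the localization maps of the domain \(R^{\rho}\) then reduces \(H^0\) to the weight-\(n\) part of \(\bigcap_i R^{\rho}_{X_i}\), and the Hartogs-type computation of that intersection is airtight: the \(X_i\) are pairwise non-associate primes of the UFD \(R^{\rho}\), so a denominator dividing a power of each of two distinct variables is a unit. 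Your reserve route via \(\operatorname{depth}_{\mathfrak{m}} R^{\rho} = N + 1 \geqslant 2\) is essentially the standard argument in the literature; its only implicit ingredient is the identification \(\bigoplus_n H^0(\mathbb{P}(\rho), \mathcal{O}_{\mathbb{P}(\rho)}(n)) \simeq H^0(U, \mathcal{O}_U)\) for the punctured cone \(U\), which follows from the same localization computation since \(U \rightarrow \mathbb{P}(\rho)\) is affine. Your observation that well-formedness is never used is also correct: the identification holds for every weighted projective space (and Dolgachev's theorem does not assume well-formedness either); the hypothesis in the lemma is contextual, since the neighbouring statements that consume it --- Lemma~\ref{lemma:WPS-WF-singular-locus}, Corollary~\ref{corollary:WPS-WF-singular-locus-O1-base-locus}, Lemma~\ref{lemma:face-ring-proj} --- genuinely need well-formedness. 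One cosmetic quibble: in the \(N = 0\) case, whether well-formedness forces \(a_0 = 1\) depends on one's convention for a gcd over an empty index set, but no separate treatment is needed at all, because for \(n \geqslant 0\) a weight-\(n\) element of \(R^{\rho}_{X_0}\) involves no negative powers of \(X_0\) and hence already lies in \(R^{\rho}_n\).
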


\begin{corollary}\label{corollary:WPS-WF-singular-locus-O1-base-locus}
  Let \(\mathbb{P}(\rho)\) be a well-formed weighted projective space. Then we have \(\Sing(\mathbb{P}(\rho)) \subset \Bs(\vert \mathcal{O}_{\mathbb{P}(\rho)}(1) \vert)\).
\end{corollary}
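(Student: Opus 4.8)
The plan is to combine the explicit description of the singular locus in Lemma~\ref{lemma:WPS-WF-singular-locus} with the identification of sections in Lemma~\ref{lemma:WPS-WF-On-sheaves}, the bridge between the two being the observation that degree-one sections come only from weight-one variables.

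First I would pin down the degree-one sections. By Lemma~\ref{lemma:WPS-WF-On-sheaves} we have $H^0(\mathbb{P}(\rho), \mathcal{O}_{\mathbb{P}(\rho)}(1)) \simeq R^{\rho}_1$. A monomial $\prod_{i} X_i^{\alpha_i}$ with $\alpha_i \in \mathbb{Z}_{\geqslant 0}$ has degree $\sum_i a_i \alpha_i = 1$ exactly when some $\alpha_i = 1$ with $a_i = 1$ and all other exponents vanish; hence $R^{\rho}_1 = \operatorname{span}_{\mathbb{C}}\{X_i : a_i = 1\}$. Consequently $\Bs(\vert \mathcal{O}_{\mathbb{P}(\rho)}(1) \vert)$ is the common vanishing locus of all weight-one variables, which as a set is the coordinate stratum $\mathbb{P}(\rho)_J$ for $J = \{i : a_i > 1\}$. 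In the degenerate case where no weight equals one we have $R^{\rho}_1 = 0$, so the base locus is all of $\mathbb{P}(\rho)$ and the containment is trivial; I would dispose of this case first.

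Next I would take an arbitrary point $P \in \Sing(\mathbb{P}(\rho))$. By Lemma~\ref{lemma:WPS-WF-singular-locus} it lies in some coordinate stratum $\mathbb{P}(\rho)_I$ with $a_I = \gcd\{a_i : i \in I\} > 1$. Since $a_I$ divides $a_i$ for every $i \in I$, we get $a_i \geqslant a_I > 1$, so $I \subset J$ and therefore $\mathbb{P}(\rho)_I \subset \mathbb{P}(\rho)_J$. Equivalently, every variable $X_i$ with $a_i = 1$ vanishes at $P$, hence every degree-one section vanishes at $P$, i.e. $P \in \Bs(\vert \mathcal{O}_{\mathbb{P}(\rho)}(1) \vert)$. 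Taking the union over all admissible $I$ yields $\Sing(\mathbb{P}(\rho)) \subset \Bs(\vert \mathcal{O}_{\mathbb{P}(\rho)}(1) \vert)$.

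There is essentially no obstacle: the statement is a direct corollary of the two cited lemmas once one records that weight-one variables are the sole source of degree-one sections and that a positive common divisor of the weights indexed by $I$ forces each such weight to exceed one. The only point meriting care is the empty-weight-one case, handled separately above.
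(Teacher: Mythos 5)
Your proof is correct and is exactly the argument the paper leaves implicit: the corollary is stated without proof as an immediate consequence of Lemma~\ref{lemma:WPS-WF-On-sheaves} (degree-one sections are spanned by the weight-one variables) and Lemma~\ref{lemma:WPS-WF-singular-locus} (every singular stratum \(\mathbb{P}(\rho)_I\) has \(a_I > 1\), hence all weight-one variables vanish on it). Your handling of the case with no weight equal to one is a nice extra precaution, though it is subsumed by the same containment argument.
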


\begin{definition}[see~{\cite{przyjalkowski/weighted}} or~{\cite[Definition~2.12]{ovcharenko/classification}}]\label{definition:WCI}
  We refer to a closed subscheme \(X \subset \mathbb{P}(\rho)\) as a \emph{weighted complete intersection} of multidegree \(\mu = (d_1, \ldots, d_c) \in \mathbb{Z}_{> 0}^c\) if the corresponding saturated ideal \(I_X \subset R^{\rho}\) is generated by a regular sequence \((f_1, \ldots, f_c)\) of weighted homogeneous polynomials of degrees \(\deg(f_j) = d_j\).
\end{definition}

Let us recall basic geometrical properties of weighted complete intersections.

\begin{definition}[{\cite[Definition~6.3]{ianofletcher/weighted}}]\label{definition:WPV-quasismooth}
  A closed subscheme \(X \subset \mathbb{P}(\rho)\) is said to be \emph{quasi-smooth} if its affine cone \(\Spec(R^{\rho} / I_X)\) is smooth outside the origin, where \(I_X \subset R^{\rho}\) is the corresponding saturated ideal.
\end{definition}

\begin{definition}[cf.~{\cite[Definition~1.1]{dimca/singularities}}]\label{definition:WF-subscheme}
  A closed subscheme \(X \subset \mathbb{P}(\rho)\) is said to be \emph{well-formed} if \(\mathbb{P}(\rho)\) is well-formed, and \(\codim_X (X \cap \Sing(\mathbb{P}(\rho))) \geqslant 2\).
\end{definition}

\begin{proposition}[see~{\cite[Proposition~8]{dimca/singularities}} and {\cite[Corollary~2.14]{przyjalkowski/bounds}}]\label{proposition:WCI-smooth-WF-criterion}
  Let \(X \subset \mathbb{P}(\rho)\) be a weighted complete intersection. The following assertions are equivalent:
  \begin{itemize}
  \item \(X\) is smooth and well-formed;
  \item \(\mathbb{P}(\rho)\) is well-formed, \(X\) is quasi-smooth, and \(X \cap \Sing(\mathbb{P}(\rho)) = \varnothing\).
  \end{itemize}
\end{proposition}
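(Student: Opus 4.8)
The plan is to reduce everything to the quotient presentation \(\pi \colon \mathbb{A}^{N+1} \setminus \{0\} \to \mathbb{P}(\rho)\) of the weighted projective space by the \(\mathbb{G}_m\)-action with weights \(\rho\), under which \(C_X^* := \pi^{-1}(X) = \Spec(R^\rho / I_X) \setminus \{0\}\) is the punctured affine cone, so that quasi-smoothness of \(X\) (Definition~\ref{definition:WPV-quasismooth}) means exactly smoothness of \(C_X^*\). The first thing I would record is the stabilizer analysis: by Lemma~\ref{lemma:WPS-WF-singular-locus} a point of \(\mathbb{P}(\rho)\) is singular precisely when its preimage consists of points with nontrivial \(\mathbb{G}_m\)-stabilizer, and over the smooth locus the action is free, so \(\pi\) restricts there to a \(\mathbb{G}_m\)-torsor. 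Consequently, for any \(P \notin \Sing(\mathbb{P}(\rho))\) the scheme \(X\) is smooth at \(P\) if and only if \(C_X^*\) is smooth along \(\pi^{-1}(P)\).

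Given this, the implication from the second assertion to the first is immediate. If \(X\) is quasi-smooth and \(X \cap \Sing(\mathbb{P}(\rho)) = \varnothing\), then \(X\) is contained in the smooth locus where \(\pi\) is a torsor, so the smoothness of \(C_X^*\) descends to smoothness of \(X\); and \(X\) is well-formed in the sense of Definition~\ref{definition:WF-subscheme} because \(\mathbb{P}(\rho)\) is well-formed while the inequality \(\codim_X(X \cap \Sing(\mathbb{P}(\rho))) \geq 2\) holds vacuously.

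For the converse, assume \(X\) smooth and well-formed. Quasi-smoothness will again follow from the torsor property as soon as \(X \cap \Sing(\mathbb{P}(\rho)) = \varnothing\), so the whole difficulty — and the step I expect to be the main obstacle — is establishing this disjointness. I would argue by contradiction: suppose some \(P \in X\) lies on a stratum with \(a_I = m > 1\), and pass to a slice \(V\) transverse to the \(\mathbb{G}_m\)-orbit through a lift of \(P\). Then \(V\) is smooth, the stabilizer \(G = \mu_m\) acts on \(V\) with \(V/G\) a neighborhood of \(P\), and \(\widetilde{X} := V \cap C_X^*\) is a transverse slice of the cone which maps to \(X\) as the finite degree-\(m\) quotient \(\widetilde{X} \to \widetilde{X}/G = X\); the point of \(\widetilde{X}\) over \(P\) is fixed by \(G\). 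Well-formedness of \(\mathbb{P}(\rho)\) ensures that \(G\) acts without quasi-reflections, so the locus of points with nontrivial stabilizer has codimension at least \(2\).

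The crux is then to prove that \(\widetilde{X}\) is normal and invoke purity of the branch locus. Normality I would obtain from Serre's criterion: as \(X\) is a weighted complete intersection, \(C_X^*\) and hence its transverse slice \(\widetilde{X}\) are Cohen--Macaulay (Definition~\ref{definition:WCI}), giving \(S_2\); and since \(\widetilde{X} \to X\) is étale over the free locus \(X \setminus \Sing(\mathbb{P}(\rho))\), the singularities of \(\widetilde{X}\) lie over \(X \cap \Sing(\mathbb{P}(\rho))\), which by well-formedness of the subscheme \(X\) has codimension at least \(2\), giving \(R_1\). Now \(\widetilde{X} \to X\) is a finite morphism from a normal variety onto the smooth variety \(X\), ramified only over \(X \cap \Sing(\mathbb{P}(\rho))\), which has codimension at least \(2\); Zariski--Nagata purity then forces it to be étale everywhere, so the \(G\)-action is free and has no fixed points, contradicting the fixed point over \(P\). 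This yields \(X \cap \Sing(\mathbb{P}(\rho)) = \varnothing\), and quasi-smoothness follows. The technical points I would watch most carefully are the identification of the local slice \(\widetilde{X}\) with the germ of the cone, so that the Cohen--Macaulay property transfers, and the confinement of \(\Sing(\widetilde{X})\) to codimension \(\geq 2\), on which the application of Serre's criterion depends.
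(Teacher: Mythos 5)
The paper itself does not prove this proposition: it is quoted from the literature (Dimca's Proposition~8 together with Przyjalkowski--Shramov's Corollary~2.14), so there is no internal argument to compare yours against. Your proposal is, in substance, a correct reconstruction of the standard proof, and it runs along the same lines as Dimca's original argument. The easy implication is exactly as you say: once \(\mathbb{P}(\rho)\) is well-formed, Lemma~\ref{lemma:WPS-WF-singular-locus} identifies \(\Sing(\mathbb{P}(\rho))\) with the image of the locus of points with nontrivial \(\mathbb{G}_m\)-stabilizer, so if \(X\) avoids it, the punctured cone \(C_X^*\) is a \(\mathbb{G}_m\)-torsor over \(X\) and smoothness descends, while well-formedness of \(X\) holds vacuously. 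For the converse, your chain --- local slice \(\widetilde{X}\) with \(\mu_m\)-action, \(S_2\) from the Cohen--Macaulay property of the complete-intersection cone, \(R_1\) from \(\codim_X (X \cap \Sing(\mathbb{P}(\rho))) \geqslant 2\), normality by Serre, and then Zariski--Nagata purity over the smooth variety \(X\) to force the quotient map to be étale, contradicting the fixed point over \(P\) --- is the right mechanism, and each application is legitimate (finite surjective maps preserve codimension, so the \(R_1\) bound transfers from \(X\) to \(\widetilde{X}\)).

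One loose end you should close: the step ``étale everywhere, so the \(G\)-action is free'' requires the action of \(G = \mu_m\) on \(\widetilde{X}\) to be faithful. A priori a nontrivial element \(g \in G\) could act trivially on \(\widetilde{X} = V \cap C_X^*\) while acting nontrivially on the ambient slice \(V\); in that case \(\widetilde{X} \rightarrow \widetilde{X}/\langle g \rangle\) is an isomorphism, hence étale, and purity yields no contradiction at all. This degenerate case is excluded by the same hypotheses, but it must be said: if \(g\) acted trivially near the point over \(P\), then the germ of \(\widetilde{X}\) would lie in \(\mathrm{Fix}_V(g)\), every point of which has nontrivial stabilizer and therefore maps into \(\Sing(\mathbb{P}(\rho))\) by Lemma~\ref{lemma:WPS-WF-singular-locus}; a whole neighborhood of \(P\) in \(X\) would then be contained in \(\Sing(\mathbb{P}(\rho))\), contradicting \(\codim_X (X \cap \Sing(\mathbb{P}(\rho))) \geqslant 2\). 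With that case handled (and the routine verification of the analytic slice structure, which you flagged yourself), the argument is complete.
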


\begin{example}\label{example:WCI-non-WF-pathologies}
  If a weighted complete intersection is not well-formed, various pathologies can arise.
  \begin{itemize}
  \item There exists a K3-surface which can be realized as a smooth and quasi-smooth hypersurface \(X\) of degree 9 in \(\mathbb{P}(1, 2, 2, 3)\) which is not well-formed (see~\cite[6.15(ii)]{ianofletcher/weighted}). The naive adjunction formula (see~{\cite[Theorem~3.3.4]{dolgachev/weighted}},~{\cite[\nopp 6.14]{ianofletcher/weighted}}) would imply that \(\omega_X \simeq \mathcal{O}_X(1)\), which is nonsense.
  \item A general weighted hypersurface of multidegree \(6\) in \(\mathbb{P}(2, 3, 5^{(t)})\) is not well-formed or quasi-smooth for any \(t > 0\); nonetheless, it is smooth (see~\cite[Example~2.9]{przyjalkowski/on-automorphisms}).
  \end{itemize}
\end{example}

\begin{definition}[{\cite[Definition~6.5]{ianofletcher/weighted}}]\label{definition:WCI-linear-cone}
  A weighted complete intersection \(X \subset \mathbb{P}(a_0, \ldots, a_N)\) of multidegree \((d_1, \ldots, d_c)\) is \emph{an intersection with a linear cone} if we have \(a_i = d_j\) for some \(i\) and \(j\). 
\end{definition}

\begin{remark}
  Not to be an intersection with a linear cone is a rather mild restriction, provided that a weighted complete intersection is general and quasi-smooth (see~\cite[Proposition~2.9]{przyjalkowski/codimension}).
\end{remark}

Let \(X \subset \mathbb{P}(\rho)\) be a weighted complete intersection of multidegree \(\mu\). We know from Proposition~\ref{proposition:WCI-smooth-WF-criterion} that it is smooth and well-formed if and only if it is quasi-smooth, and \(X \cap \Sing(\mathbb{P}(\rho)) = \varnothing\). There exist necessary and sufficient conditions for quasi-smoothness of a general weighted complete intersection in terms of its weights and degrees (see~\cite[Proposition~3.1]{pizzato/nonvanishing}). Technically speaking, a necessary and sufficient combinatorial condition for \(X \cap \Sing(\mathbb{P}(\rho)) = \varnothing\) can be extracted from Lemma~\ref{lemma:WPS-WF-singular-locus} and~\cite[Lemma~A.12]{ovcharenko/classification}. But such a characterization is unusable, so we prefer to use the following necessary condition.

\begin{definition}[see~{\cite[Definition~2.48]{ovcharenko/classification}}, cf.~{\cite[Definition~4.1]{pizzato/nonvanishing}}]\label{definition:strictly-regular-pair}
  Let \(\rho = (a_0, \ldots, a_N)\) and \(\mu = (d_1, \ldots, d_c)\) be tuples of positive integers. We refer to \((\rho; \mu)\) as a \emph{strictly regular pair} if for any subset \(I \subset \{0, \ldots, N\}\) such that \(\gcd(\{a_i \mid i \in I\}) > 1\) there exists a subset \(J \subset \{1, \ldots, c\}\), where \(\vert J \vert = \vert I \vert\),  with the following property: for any \(j \in J\) there exist numbers \(\{n_{i, j} \in \mathbb{Z}_{\geqslant 0} \mid i \in I\}\) such that \(d_j = \sum_{i \in I} n_{i, j} a_i\). 
\end{definition}

\begin{proposition}[see~{\cite[Corollary~2.49]{ovcharenko/classification}}, cf.~{\cite[Lemma~2.15]{przyjalkowski/bounds}},~{\cite[Proposition~4.1]{chen/quasismooth}}]\label{proposition:WCI-smooth-WF-combinatorics}
  Let \(X \subset \mathbb{P}(\rho)\) be a smooth well-formed weighted complete intersection of multidegree \(\mu\) which is not an intersection with a linear cone. Then the pair \((\rho; \mu)\) is strictly regular.
\end{proposition}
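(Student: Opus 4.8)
The plan is to fix an arbitrary subset \(I \subset \{0, \ldots, N\}\) with \(a_I = \gcd(\{a_i \mid i \in I\}) > 1\) and to manufacture the required index set \(J\) by restricting the defining equations of \(X\) to the coordinate stratum \(\mathbb{P}(\rho)_I\). First I would invoke Proposition~\ref{proposition:WCI-smooth-WF-criterion} to convert the hypothesis ``\(X\) is smooth and well-formed'' into the condition \(X \cap \Sing(\mathbb{P}(\rho)) = \varnothing\). Since \(a_I > 1\), Lemma~\ref{lemma:WPS-WF-singular-locus} places the stratum \(\mathbb{P}(\rho)_I\) inside \(\Sing(\mathbb{P}(\rho))\), and therefore \(X \cap \mathbb{P}(\rho)_I = \varnothing\). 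This is the sole geometric input, and it works uniformly, including the case of a singleton \(I = \{i\}\) with \(a_i > 1\).

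Next I would analyze the restricted equations. Let \(f_1, \ldots, f_c\) be the regular sequence generating \(I_X\), with \(\deg f_j = d_j\), and let \(\bar f_j\) denote the image of \(f_j\) in \(R_I := \mathbb{C}[X_i : i \in I] = R^{\rho} / \langle X_i \mid i \notin I \rangle\), obtained by setting \(X_i = 0\) for \(i \notin I\). By Definition~\ref{definition:coordinate-stratum} we have \(X \cap \mathbb{P}(\rho)_I = \Proj(R_I / (\bar f_1, \ldots, \bar f_c))\), so its emptiness says precisely that \(\bar f_1, \ldots, \bar f_c\) have no common zero in \(\mathbb{P}(\rho)_I\). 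Passing to the affine cone \(\Spec(R_I) \cong \mathbb{A}^{|I|}\), whose vertex lies on every hypersurface \(\{\bar f_j = 0\}\) because the \(\bar f_j\) are weighted homogeneous of positive degree, this means that the common zero locus of the \(\bar f_j\) is exactly the origin.

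Now comes the counting. Set \(J' = \{j \mid \bar f_j \neq 0\}\). The monomials of \(f_j\) surviving the restriction are exactly those supported on \(\{X_i : i \in I\}\); since each such monomial \(\prod_{i \in I} X_i^{\alpha_i}\) has degree \(\sum_{i \in I} a_i \alpha_i = d_j\), the mere nonvanishing \(\bar f_j \neq 0\) forces a representation \(d_j = \sum_{i \in I} n_{i,j} a_i\) with \(n_{i,j} \in \mathbb{Z}_{\geqslant 0}\). On the other hand, the origin has codimension \(|I|\) in \(\mathbb{A}^{|I|}\) and is cut out by the \(|J'|\) nonzero polynomials \(\{\bar f_j : j \in J'\}\); Krull's height theorem then yields \(|J'| \geqslant |I|\). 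Choosing any \(J \subseteq J'\) with \(|J| = |I|\) produces an index set whose every member admits the required expression, which is exactly the defining property of a strictly regular pair in Definition~\ref{definition:strictly-regular-pair}.

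The step requiring the most care, and the main obstacle, is the clean translation of the geometric emptiness \(X \cap \mathbb{P}(\rho)_I = \varnothing\) into the algebraic inequality \(|J'| \geqslant |I|\): one must verify that the affine cone over \(\mathbb{P}(\rho)_I\) is honest \(|I|\)-dimensional affine space and that the ideal defining the intersection in \(R_I\) really is generated by \(\bar f_1, \ldots, \bar f_c\), before the Hauptidealsatz can be applied. The hypothesis that \(X\) is not an intersection with a linear cone does not appear to enter this direction directly; I would expect it to serve only to ensure that the multidegree \(\mu\) genuinely records a regular sequence of the stated degrees rather than one concealing an eliminable linear equation, and I would double-check that no hidden use of it is needed when identifying the surviving monomials of the \(\bar f_j\).
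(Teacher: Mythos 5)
Your proof is correct, but note that the paper does not actually prove this proposition: it is imported by citation from \cite{ovcharenko/classification}*{Corollary~2.49}, so there is no internal proof to compare against. Your argument --- reduce to \(X \cap \mathbb{P}(\rho)_I = \varnothing\) via Proposition~\ref{proposition:WCI-smooth-WF-criterion} and Lemma~\ref{lemma:WPS-WF-singular-locus}, identify \(X \cap \mathbb{P}(\rho)_I\) set-theoretically with the vanishing locus of the restrictions \(\overline{f}_1, \ldots, \overline{f}_c\) in \(\Proj(\mathbb{C}[X_i \mid i \in I])\), pass to the affine cone, and apply Krull's height theorem to get \(\vert J' \vert \geqslant \vert I \vert\) --- is precisely the standard dimension count that underlies the cited corollary (in projective language: a projective variety of dimension \(\vert I \vert - 1\) cannot be cut down to the empty set by fewer than \(\vert I \vert\) hypersurface sections). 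The points you flag as delicate are indeed harmless: only set-theoretic, not scheme-theoretic, equality of \(X \cap \mathbb{P}(\rho)_I\) with \(V_{+}(\overline{f}_1, \ldots, \overline{f}_c)\) is needed, since saturating the ideal \((f_1, \ldots, f_c)\) does not change its vanishing locus; and your bound \(\vert J' \vert \geqslant \vert I \vert\) in particular forces \(\vert I \vert \leqslant c\), which the definition of strict regularity implicitly requires. Your closing observation is also accurate: the linear-cone hypothesis never enters your argument, and it is genuinely not needed for this implication once Proposition~\ref{proposition:WCI-smooth-WF-criterion} is taken as stated. It is inherited from the quasi-smoothness-based arguments of the cf.'d references (Przyjalkowski--Shramov, Chen), which establish regularity-type statements through the rank of the Jacobian along a stratum; there a linear-cone equation \(f_j = X_i + \cdots\) would satisfy the rank condition without producing any of the required monomials, so the hypothesis is essential in that setting, whereas your purely set-theoretic emptiness argument bypasses it.
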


\begin{remark}\label{remark:strictly-regular-pair}
  Let \(\mathbb{P}(\rho)\) be a well-formed weighted projective space. Then a pair \((\rho; \mu)\) is strictly regular if and only if for any coordinate stratum \(\mathbb{P}(\rho)_I \subset \Sing(\mathbb{P}(\rho))\) there exists a subset \(J \subset \{1, \ldots, c\}\) of cardinality \(\vert I \vert\) such that \(\mathbb{P}(\rho)_I \not \subset \Bs(\vert \mathcal{O}_{\mathbb{P}(\rho)}(d_j) \vert)\) for any \(j \in J\).    
\end{remark}

\section{Weighted simplicial complexes}\label{section:WSC}

In this section we investigate to what extent we can generalize the approach of the proof of Theorem~\ref{theorem:nef}. The main observation of~\cite{przyjalkowski/nef} is that the singular locus of a weighted projective space can be interpreted as a \emph{weighted simplicial complex}, i.e., an abstract simplicial complex equipped with a weight function.

\begin{definition}[{cf.~\cite[Definition~3.1]{przyjalkowski/nef}}]\label{definition:WSC}
  Let \(\mathcal{C}\) be a finite abstract simplicial complex (see~\cite[Definition~5.1.1]{bruns/cohenmacaulay}). A \emph{weighted simplicial complex} is a pair \((\mathcal{C}, \Phi_{\mathcal{C}})\), where \(\Phi_{\mathcal{C}} \colon \mathcal{C} \rightarrow \mathbb{Z}_{> 0}\) is a map satisfying \(\Phi_{\mathcal{C}}(I) = \gcd_{i \in I}(\Phi_{\mathcal{C}}(\{i\}))\) for any face \(I \in \mathcal{C}\). We will refer to the map \(\Phi_{\mathcal{C}}\) as the \emph{weight function} of \((\mathcal{C}, \Phi_{\mathcal{C}})\).

  A \emph{weighted simplicial map} \(F \colon (\mathcal{C}, \Phi_{\mathcal{C}}) \rightarrow (\mathcal{D}, \Phi_{\mathcal{D}})\) is a simplicial map \(F \colon \mathcal{C} \rightarrow \mathcal{D}\) such that \(\Phi_{\mathcal{C}}(I) \mid \Phi_{\mathcal{D}}(F(I))\) for any face \(I \in \mathcal{C}\).

  We denote by \(\Vertices(\mathcal{C})\) the set of all vertices of the abstract simplicial complex \(\mathcal{C}\). Then we have \(\mathcal{C} \subset 2^{\Vertices(\mathcal{C})}\), where \(2^{\Vertices(\mathcal{C})}\) is the set of all subsets of the set \(\Vertices(\mathcal{C})\).
\end{definition}

\begin{remark}
  Note that a weight function \(\Phi_{\mathcal{C}}\) is determined by its values on vertices of the face. We do not require that a subset \(I \subset \Vertices(\mathcal{C})\) with \(\gcd_{i \in I}(\Phi_{\mathcal{C}}(\{i\})) > 1\) should automatically be a face.
\end{remark}

\begin{example}
  Let \(S = \{0, 1, 2, 3\}\) be a set of cardinality 4. Following Figure~\ref{figure:example}, we can introduce on the set \(S\) the following structure of a weighted simplicial complex \((\mathcal{C}, \Phi_{\mathcal{C}})\):
  \[
    \mathcal{C} = \{I \subset \{0, 1, 2, 3\} \mid \gcd_{i \in I}(\Phi_i) > 1\}; \quad
    \Phi_{\mathcal{C}}(i) = \Phi_i, \quad i = 0, \ldots, 3.
  \]
\end{example}

\begin{figure}[H]
  \centering
  \begin{minipage}{.33\textwidth}
    \centering
    \begin{tikzpicture}[vertex/.style={inner sep=1pt,circle,draw=green!25!black,fill=green!75!white,thick}]
      \coordinate (V1) at (4,2.5);
      \coordinate (V2) at (3,.8);
      \coordinate (V3) at (5,0);
      \coordinate (V4) at (5.3,1.2);
  
      \draw (V1) -- (V2) -- (V3) -- (V4) -- cycle;
      \draw (V1) -- (V3);
      \draw[dashed] (V2) -- (V4);
      
      \node[vertex] at (V1){0};
      \node[vertex] at (V2){1};
      \node[vertex] at (V3){2};
      \node[vertex] at (V4){3};
    \end{tikzpicture} 
  \end{minipage}%
  \centering
  \begin{minipage}{.33\textwidth}
    \centering
    \begin{gather*}
      \Phi_0 = b_1 b_2 b_5, \quad
      \Phi_1 = b_1 b_4 b_6, \\
      \Phi_2 = b_3 b_4 b_5, \quad
      \Phi_3 = b_2 b_3 b_6; \\
      b_i \in \mathbb{Z}_{> 1}, \quad i = 1, \ldots, 6; \\
      \gcd(b_j, b_k) = 1, \quad j \neq k.
    \end{gather*}
  \end{minipage}
  \caption{The 1-skeleton of a tetrahedron as a weighted simplicial complex \((\mathcal{C}, \Phi_{\mathcal{C}})\).}
  \label{figure:example}
\end{figure}

\begin{definition}[{\cite[Definition~5.1.2]{bruns/cohenmacaulay}}]
  Let \(\mathcal{C}\) be a finite abstract simplicial complex. Fix a numeration of vertices: \(\Vertices(\mathcal{C}) \simeq \{0, \ldots, N\}\). The \emph{Stanley--Reisner ring} \(\mathbb{C}[\mathcal{C}]\) is the quotient of the polynomial ring \(\mathbb{C}[X_0, \ldots, X_N]\) by the ideal \(I_{\mathcal{C}}\) generated by square-free monomials corresponding to non-faces of \(\mathcal{C}\):
  \[
    I_{\mathcal{C}} = (x_{i_1} \cdots x_{i_r} \mid \{i_1, \ldots, i_r\} \not \in \mathcal{C}).
  \]
\end{definition}

\begin{definition}
  Let \((\mathcal{C}, \Phi_{\mathcal{C}})\) be a weighted simplicial complex, and let us fix a numeration of vertices: \(\Vertices(\mathcal{C}) \simeq \{0, \ldots, N\}\). The \emph{graded Stanley--Reisner ring} \(\mathbb{C}[(\mathcal{C}, \Phi_{\mathcal{C}})]\) is the Stanley--Reisner ring \(\mathbb{C}[\mathcal{C}]\) of the abstract simplicial complex \(\mathcal{C}\) equipped with the grading \(\deg(X_i) = \Phi_{\mathcal{C}}(\{i\})\).
\end{definition}

\begin{remark}
  Let \((\mathcal{C}, \Phi_{\mathcal{C}})\) be a weighted simplicial complex. Fix a numeration of vertices: \(\Vertices(\mathcal{C}) \simeq \{0, \ldots, N\}\). By construction the graded Stanley--Reisner ring \(\mathbb{C}[(\mathcal{C}, \Phi_{\mathcal{C}})]\) is a quotient of the graded polynomial ring \(\mathbb{C}[X_0, \ldots, X_N]\) by the monomial ideal \(I_{\mathcal{C}}\). Then the quotient map \(\mathbb{C}[X_0, \ldots, X_N] \twoheadrightarrow \mathbb{C}[(\mathcal{C}, \Phi_{\mathcal{C}})]\) induces the inclusion \(\Proj(\mathbb{C}[(\mathcal{C}, \Phi_{\mathcal{C}})]) \hookrightarrow \mathbb{P}(a_0, \ldots, a_N)\), where we put \(a_i = \Phi_{\mathcal{C}}(\{i\})\) for any \(i = 0, \ldots, N\).
\end{remark}

Consequently, we can describe various configurations of coordinate strata (in the sense of Definition~\ref{definition:coordinate-stratum}) in terms of weighted simplicial complexes. Our main examples would be the singular locus \(\Sing(\mathbb{P}(\rho))\) of a weighted projective space and base loci of linear systems \(\vert \mathcal{O}_{\mathbb{P}(\rho)}(d) \vert\).

\begin{notation}\label{notation:S-complex}
  Let \(\rho = (a_0, \ldots, a_N)\) be a tuple of positive integers. We denote by \((\mathcal{S}(\rho), \Phi_{\mathcal{S}(\rho)})\) the weighted simplicial complex on the set \(\{0, \ldots, N\}\) defined as follows:
  \[
    \mathcal{S}(\rho) = \{I \subset \{0, \ldots, N\} \mid \gcd_{i \in I}(a_i) > 1\}, \quad
    \Phi_{\mathcal{S}(\rho)}(I) = \gcd_{i \in I}(a_i).
  \]
\end{notation}

\begin{notation}\label{notation:base-locus}
  Let \(\rho = (a_0, \ldots, a_N)\) be a tuple of positive integers. For any \(d \in \mathbb{Z}_{> 0}\) we denote by \(\mathcal{B}_{\rho}(d)\) the abstract simplicial complex on the set \(\{0, \ldots, N\}\), where \(I \subset \{0, \ldots, N\}\) is a face if and only if we \emph{cannot} present the number \(d\) in the form \(d = \sum_{i \in I} n_i a_i\) for some non-negative integers \(\{n_i \in \mathbb{Z}_{\geqslant 0} \mid i \in I\}\). We equip \(\mathcal{B}_{\rho}(d)\) with the weight function \(\Phi_{\mathcal{B}_{\rho}(d)}\) defined by \(\Phi_{\mathcal{B}_{\rho}(d)}(\{i\}) = a_i\) for any \(i = 0, \ldots, N\).
\end{notation}

\begin{lemma}\label{lemma:face-ring-proj}
  Let \(\mathbb{P}(\rho) = \Proj(R^{\rho})\) be a well-formed weighted projective space, where \(\rho = (a_0, \ldots, a_N)\). Then
  \[
    \Sing(\mathbb{P}(\rho)) = \Proj(\mathbb{C}[(\mathcal{S}(\rho), \Phi_{\mathcal{S}(\rho)})]); \quad
    \Bs(\vert \mathcal{O}_{\mathbb{P}(\rho)}(d) \vert)^{\red} = \Proj(\mathbb{C}[(\mathcal{B}_{\rho}(d), \Phi_{\mathcal{B}_{\rho}(d)})]), \quad
    d \in \mathbb{Z}_{> 0}.
  \]
\end{lemma}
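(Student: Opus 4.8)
The plan is to reduce both identifications to a single set-theoretic description of $\Proj$ of a graded Stanley--Reisner ring and then to match the two complexes $\mathcal{S}(\rho)$ and $\mathcal{B}_{\rho}(d)$ against the descriptions of $\Sing(\mathbb{P}(\rho))$ and $\Bs(\vert \mathcal{O}_{\mathbb{P}(\rho)}(d) \vert)$ supplied by Lemmas~\ref{lemma:WPS-WF-singular-locus} and~\ref{lemma:WPS-WF-On-sheaves}. Concretely, I would first prove the auxiliary claim that for any weighted simplicial complex $(\mathcal{C}, \Phi_{\mathcal{C}})$ on $\{0, \ldots, N\}$ with $a_i = \Phi_{\mathcal{C}}(\{i\})$, the embedded subscheme satisfies, as reduced closed subschemes of $\mathbb{P}(\rho)$,
\[
  \Proj(\mathbb{C}[(\mathcal{C}, \Phi_{\mathcal{C}})]) = \bigcup_{I \in \mathcal{C}} \mathbb{P}(\rho)_I.
\]

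To establish this auxiliary claim I would argue via supports. Since $I_{\mathcal{C}}$ is a square-free monomial ideal it is radical, so $\Proj(\mathbb{C}[(\mathcal{C}, \Phi_{\mathcal{C}})])$ is reduced; for $P \in \mathbb{P}(\rho)$ write $\operatorname{supp}(P) = \{i \mid X_i(P) \neq 0\}$. A generator $X_{i_1} \cdots X_{i_r}$ of $I_{\mathcal{C}}$, corresponding to a non-face $\{i_1, \ldots, i_r\}$, vanishes at $P$ precisely when $\{i_1, \ldots, i_r\} \not\subseteq \operatorname{supp}(P)$; hence $P$ lies on the subscheme if and only if $\operatorname{supp}(P)$ contains no non-face, which, as $\mathcal{C}$ is closed under passing to subsets, is equivalent to $\operatorname{supp}(P) \in \mathcal{C}$. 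Since $\mathbb{P}(\rho)_I$ is the reduced stratum of points with support contained in $I$, and $\mathbb{P}(\rho)_J \subseteq \mathbb{P}(\rho)_I$ whenever $J \subseteq I$, the displayed equality follows (both sides being reduced).

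For the first identification I would note that the faces of $\mathcal{S}(\rho)$ are exactly the subsets $I$ with $\gcd_{i \in I}(a_i) > 1$, which are precisely the index sets in Lemma~\ref{lemma:WPS-WF-singular-locus}; thus $\bigcup_{I \in \mathcal{S}(\rho)} \mathbb{P}(\rho)_I = \Sing(\mathbb{P}(\rho))$ and the auxiliary claim closes the case. For the second identification I would use Lemma~\ref{lemma:WPS-WF-On-sheaves} to write $H^0(\mathbb{P}(\rho), \mathcal{O}_{\mathbb{P}(\rho)}(d)) \simeq R^{\rho}_d$, spanned by the monomials $\prod_i X_i^{n_i}$ with $\sum_i n_i a_i = d$, so that $\Bs(\vert \mathcal{O}_{\mathbb{P}(\rho)}(d) \vert)^{\red}$ is the common vanishing locus of these monomials. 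A point $P$ avoids the base locus exactly when some such monomial is nonzero at $P$, i.e. when $d = \sum_{i \in \operatorname{supp}(P)} n_i a_i$ for suitable $n_i \in \mathbb{Z}_{\geqslant 0}$; equivalently, $P$ lies in the base locus if and only if $\operatorname{supp}(P)$ is a face of $\mathcal{B}_{\rho}(d)$ in the sense of Notation~\ref{notation:base-locus}. Before invoking the auxiliary claim I would verify that $\mathcal{B}_{\rho}(d)$ is genuinely a simplicial complex: if $d$ is not representable using $\{a_i \mid i \in I\}$ then it is a fortiori not representable using any subset of $I$, so $\mathcal{B}_{\rho}(d)$ is closed under subsets.

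The step requiring the most care is the base-locus computation, namely translating the condition that \emph{all} degree-$d$ sections vanish at $P$ into the combinatorial condition on $\operatorname{supp}(P)$, and confirming that the resulting collection of subsets is closed under subsets so that the auxiliary claim applies. The identification of global sections with degree-$d$ monomials relies on well-formedness through Lemma~\ref{lemma:WPS-WF-On-sheaves}, and it is the reducedness on both sides---the square-free monomial ideal on one side, the reduced singular locus and reduced base locus on the other---that upgrades the set-theoretic equality of supports into the asserted equality of reduced subschemes.
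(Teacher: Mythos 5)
Your proof is correct and follows essentially the same route as the paper: both parts reduce to Lemma~\ref{lemma:WPS-WF-singular-locus} and Lemma~\ref{lemma:WPS-WF-On-sheaves} respectively, combined with unwinding Notation~\ref{notation:S-complex} and Notation~\ref{notation:base-locus}. The paper dismisses these steps as ``reformulations,'' whereas you make them precise via the auxiliary support argument (identifying \(\Proj\) of a graded Stanley--Reisner ring with the union of coordinate strata over faces) and the check that \(\mathcal{B}_{\rho}(d)\) is closed under subsets --- useful details, but not a different proof.
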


\begin{proof}
  The first part is a a reformulation of the first statement of Lemma~\ref{lemma:WPS-WF-singular-locus}. As for the second part, Lemma~\ref{lemma:WPS-WF-On-sheaves} provides the description of \(\Bs(\vert \mathcal{O}_{\mathbb{P}(\rho)}(d) \vert)^{\red}\) as the locus of common zeroes of all weighted monomials of the degree \(d\). Then it is clear that the statement is a reformulation of Notation~\ref{notation:base-locus}.
\end{proof}

\begin{example}
  Lemma~\ref{lemma:face-ring-proj} and Figure~\ref{figure:complexes} describe the singular locus \(\Sing(\mathbb{P}(\rho))\), where \(\rho = (1^{(t + 1)}, 6, 10, 15)\) for any \(t \geq 0\), and base loci of linear systems \(\Bs(\vert \mathcal{O}_{\mathbb{P}(\rho)}(16) \vert)\), \(\Bs(\vert \mathcal{O}_{\mathbb{P}(\rho)}(21) \vert)\), and \(\Bs(\vert \mathcal{O}_{\mathbb{P}(\rho)}(25) \vert)\).
\end{example}

\begin{figure}[H]
  \centering
  \begin{minipage}{.25\textwidth}
    \centering
    \begin{tikzpicture}[scale=0.625000,vertex/.style={inner sep=1pt,circle,draw=green!25!black,fill=green!75!white,thick}]
      \foreach \i in {1, ..., 3}
      \fill (\i*360/3:2) coordinate (V\i) circle(2.4 pt);
		
      \draw (V1)--(V2)--(V3)--(V1);
	
      \node[vertex] at (V1){\(t + 1\)};
      \node[vertex] at (V2){\(t + 2\)};
      \node[vertex] at (V3){\(t + 3\)};
    \end{tikzpicture} 
  \end{minipage}%
  \centering
  \begin{minipage}{.25\textwidth}
    \centering
    \begin{tikzpicture}[scale=0.625000,vertex/.style={inner sep=1pt,circle,draw=green!25!black,fill=green!75!white,thick}]
      \foreach \i in {1, ..., 3}
      \fill (\i*360/3:2) coordinate (V\i) circle(2.4 pt);
		
      \draw (V2)--(V3)--(V1);
	
      \node[vertex] at (V1){\(t + 1\)};
      \node[vertex] at (V2){\(t + 2\)};
      \node[vertex] at (V3){\(t + 3\)};
    \end{tikzpicture} 
  \end{minipage}%
  \centering
  \begin{minipage}{.25\textwidth}
    \centering
    \begin{tikzpicture}[scale=0.62500,vertex/.style={inner sep=1pt,circle,draw=green!25!black,fill=green!75!white,thick}]	
      \foreach \i in {1, ..., 3}
      \fill (\i*360/3:2) coordinate (V\i) circle(2.4 pt);
		
      \draw (V1)--(V2)--(V3);
	
      \node[vertex] at (V1){\(t + 1\)};
      \node[vertex] at (V2){\(t + 2\)};
      \node[vertex] at (V3){\(t + 3\)};
    \end{tikzpicture} 
  \end{minipage}%
  \centering
  \begin{minipage}{.25\textwidth}
    \centering
    \begin{tikzpicture}[scale=0.62500,vertex/.style={inner sep=1pt,circle,draw=green!25!black,fill=green!75!white,thick}]
      \foreach \i in {1, ..., 3}
      \fill (\i*360/3:2) coordinate (V\i) circle(2.4 pt);
		
      \draw (V3)--(V1)--(V2);
	
      \node[vertex] at (V1){\(t + 1\)};
      \node[vertex] at (V2){\(t + 2\)};
      \node[vertex] at (V3){\(t + 3\)};
    \end{tikzpicture} 
  \end{minipage}
  \caption{Weighted simplicial complexes \(\mathcal{S}(\rho)\), \(\mathcal{B}_{\rho}(16)\), \(\mathcal{B}_{\rho}(21)\), \(\mathcal{B}_{\rho}(25)\), where \(\rho = (1^{(t + 1)}, 6, 10, 15)\).}
  \label{figure:complexes}
\end{figure}

\begin{definition}\label{definition:face-poset}
  Let \(\mathcal{C}\) be an abstract simplicial complex. We denote by \((\mathcal{C}, \supseteq)\) its \emph{face poset}, i.e., the set of all faces of the complex \(\mathcal{C}\) partially ordered with respect to the inclusion.
\end{definition}

\begin{proposition}[see Section~\ref{section:proof} for the proof]\label{proposition:poset-map}
  Let \(X \subset \mathbb{P}(\rho)\) be a smooth well-formed weighted complete intersection of multidegree \(\mu = (d_1, \ldots, d_c)\) which is not an intersection with a linear cone, where we put \(\rho = (a_0, \ldots, a_N)\). There exists an order-preserving map of face posets \(\chi \colon (\mathcal{S}(\rho), \supseteq) \rightarrow (\mathcal{S}(\mu), \supseteq)\) such that
  \begin{enumerate}
  \item We have \(\vert \chi(I) \vert = \vert I \vert\) for any face \(I \in \mathcal{S}(\rho)\).
  \item For any face \(I \in \mathcal{S}(\rho)\) and any vertex \(j \in \chi(I)\) we have \(\mathbb{P}(\rho)_I \not \subset \Bs(\vert \mathcal{O}_{\mathbb{P}(\rho)}(d_j) \vert)\). In particular, for any \(j = \Vertices(\mathcal{S}(\mu)) = \{1, \ldots, c\}\) and \(I \in \mathcal{S}(\rho)\) such that \(I \subset \chi^{-1}(j)\) we have \(\mathbb{P}(\rho)_I \cap \Bs(\vert \mathcal{O}_{\mathbb{P}(\rho)}(d_j) \vert)= \varnothing\).
  \item The associated vertex map does not contract any edge \(I = \{i_1, i_2\} \in \mathcal{S}(\rho)\) with \(a_{i_1} \mid a_{i_2}\).
  \end{enumerate}
\end{proposition}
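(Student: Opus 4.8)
The plan is to translate everything into a selection problem on the face poset and to solve it by a Hall-type argument, with the prime factorization of the weights organizing the combinatorics.

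First I would set up the combinatorial data. Since $X$ is smooth, well-formed, Fano and not an intersection with a linear cone, Proposition~\ref{proposition:WCI-smooth-WF-combinatorics} applies and $(\rho;\mu)$ is strictly regular. For a face $I\in\mathcal S(\rho)$ write $A(I)=\{\,j\in\{1,\dots,c\}:\mathbb P(\rho)_I\not\subset\Bs(\vert\mathcal O_{\mathbb P(\rho)}(d_j)\vert)\,\}$; by Lemma~\ref{lemma:face-ring-proj} and Notation~\ref{notation:base-locus} this is exactly the set of $j$ for which $d_j=\sum_{i\in I}n_ia_i$ has a solution in non-negative integers. Then condition~(2) is literally the requirement $\chi(I)\subseteq A(I)$, while strict regularity says $\vert A(I)\vert\ge\vert I\vert$ for every face $I$. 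Two structural facts make the target complex $\mathcal S(\mu)$ almost invisible. (i) $A$ is monotone: padding a representation with zero coefficients gives $A(I')\subseteq A(I)$ whenever $I'\subseteq I$. (ii) If $p$ is a prime dividing $\gcd_{i\in I}(a_i)$, then $p\mid d_j$ for every $j\in A(I)$, hence $A(I)\subseteq\{j:p\mid d_j\}$ and every non-empty subset of $A(I)$ is automatically a face of $\mathcal S(\mu)$. So the whole problem reduces to producing an inclusion-monotone assignment $I\mapsto\chi(I)\subseteq A(I)$ with $\vert\chi(I)\vert=\vert I\vert$; order-preservation, membership in $\mathcal S(\mu)$, and condition~(2) then hold for free, and the ``in particular'' clause follows because $\chi(\{i\})=\{j\}$ already forces $a_i\mid d_j$, so $I\subseteq\chi^{-1}(j)$ gives $a_i\mid d_j$ for all $i\in I$, which is exactly $\mathbb P(\rho)_I\cap\Bs(\vert\mathcal O_{\mathbb P(\rho)}(d_j)\vert)=\varnothing$.

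Next I would exploit the prime decomposition $\mathcal S(\rho)=\bigcup_p\Delta(V_p)$, where $V_p=\{i:p\mid a_i\}$ and $\Delta(V_p)$ is the full simplex on $V_p$, together with the analogous decomposition of $\mathcal S(\mu)$. By~(ii) the simplex $\Delta(V_p)$ must be sent into the simplex on $\{j:p\mid d_j\}$, so the map can be assembled prime by prime, the only coupling being along the overlaps $V_p\cap V_{p'}$. On each $\Delta(V_p)$ I would first fix the vertex values: using Hall's marriage theorem on the bipartite graph joining $i\in V_p$ to its list $A(\{i\})$ one looks for a system of distinct representatives, and to secure condition~(3) I would break ties by divisibility — if $a_{i_1}\mid a_{i_2}$ then $A(\{i_2\})\subseteq A(\{i_1\})$, so I match $i_2$ inside the smaller list $A(\{i_2\})$ and send $i_1$ to a different degree, keeping $V(i_1)\ne V(i_2)$. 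The map $\chi$ is then extended to all faces by $\chi(I)\supseteq\bigcup_{i\in I}V(i)$, padding with further elements of $A(I)$ exactly when the vertex map has contracted some (necessarily non-divisible) edge inside $I$.

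The hard part will be the consistency of this extension. Order-preservation forces $\chi(I)$ to contain $\chi(I')$ for every subface $I'\subseteq I$ simultaneously, and since $\vert\chi(I)\vert=\vert I\vert$ is fixed, the images of the facets of $I$ must fit inside a set of size exactly $\vert I\vert$; if the padded choices on different subfaces are made carelessly their union overshoots this bound. The crux is therefore a \emph{simultaneous} monotone system of distinct representatives over the entire face poset, equivalently a Hall-type condition guaranteeing that the selections forced on the maximal faces $V_p$ can be made mutually compatible on the overlaps $V_p\cap V_{p'}$. I expect this compatibility — and not the mere counting inequality $\vert A(I)\vert\ge\vert I\vert$ — to carry the real content: the bound from strict regularity controls each face in isolation but not the global fit, so the argument must feed in the finer structure of the semigroups $\langle a_i:i\in I\rangle$ (ultimately the quasi-smoothness of $X$) to exclude overcrowded configurations. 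Once that compatibility is established, verifying properties~(1)--(3) is routine bookkeeping.
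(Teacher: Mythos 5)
Your reduction is correct as far as it goes: rewriting condition~(2) as \(\chi(I) \subseteq A(I)\), the monotonicity of \(A\), the observation that any subset of \(A(I)\) is automatically a face of \(\mathcal{S}(\mu)\), and the derivation of the ``in particular'' clause from the singleton case all check out, and they match the reformulation the paper makes via Remark~\ref{remark:strictly-regular-pair}. The first genuine gap is the Hall step. A system of distinct representatives on \(V_p\) requires \(\vert \bigcup_{i \in S} A(\{i\}) \vert \geqslant \vert S \vert\) for every \(S \subseteq V_p\), and strict regularity does not give this: it bounds \(\vert A(S) \vert\), and the containment \(\bigcup_{i \in S} A(\{i\}) \subseteq A(S)\) goes the wrong way. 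This is not a technicality, since Hall's condition fails in the paper's central example (Example~\ref{example:no-strong-nef-partition}, Figure~\ref{figure:poset-map}): for \(\rho = (1^{(t+1)}, 6, 10, 15)\) and \(\mu = (16, 21, 25, 30)\) each of the three singular vertices has \(A(\{i\}) = \{4\}\), because only the degree \(30\) is divisible by \(6\), by \(10\), and by \(15\). Hence no system of distinct representatives exists on any \(V_p\), and the vertex map is forced to contract every edge of \(\mathcal{S}(\rho)\). Your tie-breaking remark for condition~(3) does not rescue this, since the obstruction is injectivity of the vertex map on all of \(V_p\), which is unattainable here.

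The second and decisive gap is that you leave the crux --- the existence of an inclusion-monotone choice \(I \mapsto \chi(I) \subseteq A(I)\) with \(\vert \chi(I) \vert = \vert I \vert\) --- as an expectation rather than a proof; that selection problem \emph{is} the proposition. Moreover, your framing (first a vertex map, then ``padding'' faces) points away from the solution. The paper's proof abandons vertex-determined images altogether; this is exactly why the statement is phrased for maps of face posets, where the image of a face need not be the union of the images of its vertices. Concretely, for every value \(b \in \im(\Phi_{\mathcal{S}(\rho)})\) the paper applies strict regularity to the single maximal face \(\mathcal{S}(\rho)^{(b)} = \{i : b \mid a_i\}\), producing an injection \(\chi^{(b)}\) of this whole face into \(\{1, \ldots, c\}\) whose image consists of degrees representable on \(\mathcal{S}(\rho)^{(b)}\); only the one-face inequality \(\vert A(\mathcal{S}(\rho)^{(b)}) \vert \geqslant \vert \mathcal{S}(\rho)^{(b)} \vert\) is used, so no Hall-type condition ever arises. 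These injections are then made compatible along the divisibility order on \(\im(\Phi_{\mathcal{S}(\rho)})\) by Noetherian induction, and one sets \(\chi(I) = \chi^{(\Phi(I))}(I)\), so that (1) follows from injectivity of \(\chi^{(\Phi(I))}\) on the whole maximal face rather than from any distinctness of vertex images. In particular, your expectation that the argument must feed in finer semigroup structure or quasi-smoothness beyond strict regularity is off the mark: strict regularity suffices, but it must be applied to the maximal faces indexed by gcd values and organized by the divisibility poset, not prime by prime at the level of vertices.
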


\begin{remark}
  Formally speaking, in Notation~\ref{notation:S-complex} we have started the numeration from \(0\), and the numeration of the tuple \(\mu = (d_1, \ldots, d_c)\) starts from \(1\). We hope that it would not confuse the reader.
\end{remark}

\begin{remark}
  The notion of a face poset is a tautology, one can \emph{define} an abstract simplicial complex as a kind of poset (cf.~\cite[Definition~5.1.1]{bruns/cohenmacaulay}). The difference here is that we are interested in ``generalized simplicial maps'' in the following sense. We require only that the image of a subface should be contained in the image of a face, yet the image of a face \emph{is not defined} by the images of its vertices.
\end{remark}

\begin{example}
  The result of applying of Proposition~\ref{proposition:poset-map} to Example~\ref{example:no-strong-nef-partition} is depicted at Figure~\ref{figure:poset-map}.
\end{example}

\begin{figure}[H]
  \centering
  \begin{minipage}{.33\textwidth}
    \centering
    \begin{tikzpicture}[scale=0.625000,vertex/.style={inner sep=1pt,circle,draw=green!25!black,fill=green!75!white,thick}]
      \foreach \i in {1, ..., 3}
      \fill (\i*360/3:2) coordinate (V\i) circle(2.4 pt);
		
      \draw (V1)--(V2)--(V3)--(V1);
	
      \node[vertex] at (V1){\(t + 1\)};
      \node[vertex] at (V2){\(t + 2\)};
      \node[vertex] at (V3){\(t + 3\)};
    \end{tikzpicture} 
  \end{minipage}%
  \centering
  \begin{minipage}{.33\textwidth}
    \centering
    \begin{tikzpicture}[scale=0.625000,vertex/.style={inner sep=1pt,circle,draw=cyan!25!black,fill=cyan!75!white,thick}]
      \foreach \i in {1, ..., 3}
      \fill (\i*360/3:2) coordinate (V\i) circle(2.4 pt);
      \fill (0:0) coordinate (V0) circle(2.4 pt);

      \draw (V0)--(V1);
      \draw (V0)--(V2);
      \draw (V0)--(V3);
	
      \node[vertex] at (V0){\(4\)};
      \node[vertex] at (V1){\(1\)};
      \node[vertex] at (V2){\(2\)};
      \node[vertex] at (V3){\(3\)};
    \end{tikzpicture} 
  \end{minipage}%
  \centering
  \begin{gather*}
    \{t + 1\}, \{t + 2\}, \{t + 3\} \mapsto \{4\}, \; \{t + 1, t + 2\} \mapsto \{1, 4\}, \\
    \{t + 1, t + 3\} \mapsto \{2, 4\}, \; \{t + 2, t + 3\} \mapsto \{3, 4\}; \\
    \Phi_{\mathcal{S}(\rho)}(\{t + 1\}) = 6, \; \Phi_{\mathcal{S}(\rho)}(\{t + 2\}) = 10, \;
    \Phi_{\mathcal{S}(\rho)}(\{t + 3\}) = 15; \\
    \Phi_{\mathcal{S}(\mu)}(\{1\}) = 16, \; \Phi_{\mathcal{S}(\mu)}(\{2\}) = 21, \;
    \Phi_{\mathcal{S}(\mu)}(\{3\}) = 25, \; \Phi_{\mathcal{S}(\mu)}(\{4\}) = 30.
  \end{gather*}
  \caption{An order-preserving map of face posets \(\chi \colon (\mathcal{S}(\rho), \supseteq) \rightarrow (\mathcal{S}(\mu), \supseteq)\), where we put \(\rho = (1^{(t + 1)}, 6, 10, 15)\) and \(\mu = (16, 21, 25, 30)\) for any \(t \geqslant 0\).}
  \label{figure:poset-map}
\end{figure}

The following examples show that the combinatorics of smooth well-formed weighted complete intersections can be arbitrarily complicated from the point of view of simplicial geometry.

\begin{proposition}[see Section~\ref{section:proof} for the proof]\label{proposition:realization}
  Let \(\mathcal{C}\) be a finite abstract simplicial complex. Then there exists a tuple \(\rho\) of positive integers such that \(\mathcal{C}\) is isomorphic to \(\mathcal{S}(\rho)\) as an abstract simplicial complex.
\end{proposition}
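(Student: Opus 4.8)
The plan is to realize $\mathcal{C}$ by a \emph{prime labeling} of its facets. Recall from Notation~\ref{notation:S-complex} that $\mathcal{S}(\rho)$ records precisely those subsets $I \subset \{0, \ldots, N\}$ whose weights share a common prime factor. So I want to encode the facet-containment relation of $\mathcal{C}$ directly into the prime factorizations of the weights: a set of vertices should share a prime exactly when they lie together in some maximal face.

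Concretely, I would first enumerate the facets (maximal faces) $\Facets(\mathcal{C}) = \{F_1, \ldots, F_k\}$ and choose $k$ pairwise distinct primes $p_1, \ldots, p_k$. Identifying $\Vertices(\mathcal{C})$ with $\{0, \ldots, N\}$, I then set, for each vertex $i$,
\[
  a_i = \prod_{\ell \,:\, i \in F_\ell} p_\ell,
\]
the product of the primes attached to those facets containing $i$, and put $\rho = (a_0, \ldots, a_N)$. Since in a finite complex every vertex lies in at least one facet, each product is nonempty and $a_i > 1$; in particular every singleton $\{i\}$ is a vertex of $\mathcal{S}(\rho)$, so the vertex sets of $\mathcal{C}$ and $\mathcal{S}(\rho)$ coincide under this identification.

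The key step is then the equivalence: for any $I \subset \{0, \ldots, N\}$, a prime $p_\ell$ divides $a_i$ if and only if $i \in F_\ell$, so $\gcd_{i \in I}(a_i) > 1$ if and only if some $p_\ell$ divides every $a_i$ with $i \in I$, i.e.\ if and only if $I \subset F_\ell$ for some $\ell$. Because every face of $\mathcal{C}$ is contained in a facet and, conversely, every subset of a facet is a face, this last condition is equivalent to $I \in \mathcal{C}$. Hence $I \in \mathcal{S}(\rho)$ if and only if $I \in \mathcal{C}$, which is exactly the desired isomorphism of abstract simplicial complexes.

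This construction is essentially forced and involves no delicate estimates, so I do not expect a substantive obstacle; the only points requiring care are the bookkeeping around degenerate inputs (an empty vertex set, or a vertex belonging to several facets) and a clean statement of the ``only if'' direction, namely that no non-face accidentally acquires a common factor. This is precisely where \emph{distinctness} of the primes is used rather than arbitrary integers $> 1$: distinctness guarantees that a common divisor of $\{a_i \mid i \in I\}$ must involve a single $p_\ell$, forcing $I$ to sit inside the single facet $F_\ell$. The mild remaining task is to spell out the identification of vertex sets so that the conclusion is genuinely an isomorphism of complexes and not merely an equality of face collections.
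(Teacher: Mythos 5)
Your proof is correct and takes essentially the same approach as the paper: the paper assigns pairwise coprime integers \(b_j > 1\) to the facets and propagates weights down the face poset by taking least common multiples (phrased as Noetherian induction), which unwinds to exactly your closed-form assignment \(a_i = \prod_{\ell \,:\, i \in F_\ell} p_\ell\) with pairwise coprime facet labels. The key verification --- that \(\gcd_{i \in I}(a_i) > 1\) holds if and only if \(I\) is contained in some facet, i.e., if and only if \(I \in \mathcal{C}\) --- is the same in both arguments.
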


\begin{example}\label{example:contraction}
  Let \(\Delta_l(N)\) be the \(l\)-skeleton of the \(N\)-simplex. By Proposition~\ref{proposition:realization} there exists a tuple \(\rho = (a_0, \ldots, a_N)\) such that \(\Delta_l(N) \simeq \mathcal{S}(\rho)\). Fix a numeration of facets: \(\Facets(\mathcal{S}(\rho)) \simeq \{I_1, \ldots, I_w\}\). Put
  \[
    \mu = (2^{(m)}, d_1, \ldots, d_w; \lcm(\rho)^{(l)}), \quad
    d_j = \sum_{i \in I_j} a_i, \quad j = 1, \ldots, w; \quad m \in \mathbb{Z}_{\geqslant 0}.
  \]
  A general weighted complete intersection \(X \subset \mathbb{P}(1^{(t)}, \rho)\) of multidegree \(\mu\) exists for any \(t > m + w + l\) by~\cite[Lemma~2.18]{ovcharenko/classification}. It is smooth and well-formed by Proposition~\ref{proposition:WCI-smooth-WF-criterion}, Lemma~\ref{lemma:WPS-WF-singular-locus}, and~\cite[Proposition~3.1]{pizzato/nonvanishing}. If we choose the tuple \(\rho\) such that \(a_i \nmid d_j\) for any \(i = 0, \ldots, N\) and \(j = 1, \ldots, w\), then the image of any weighted simplicial map \(\mathcal{S}(\rho) \rightarrow \mathcal{S}(\mu)\) is contained in the \((l - 1)\)-simplex \(\{m + w + 1, \ldots, m + w + l\} \in \mathcal{S}(\rho)\).
\end{example}

\begin{proposition}[see Section~\ref{section:proof} for the proof]\label{proposition:inverse}
  Let \(X \subset \mathbb{P}(\rho)\) be a general quasi-smooth weighted complete intersection of multidegree \(\mu\). Assume that there exists a weighted simplicial map \(F \colon \mathcal{S}(\rho) \rightarrow \mathcal{S}(\mu)\) (see Definition~\ref{definition:WSC}) that does not contract any face of the complex \(\mathcal{S}(\rho)\). Then \(X\) is smooth and well-formed.
\end{proposition}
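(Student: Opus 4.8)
The plan is to verify the hypotheses of Proposition~\ref{proposition:WCI-smooth-WF-criterion} in the equivalent form: $\mathbb{P}(\rho)$ is well-formed, $X$ is quasi-smooth, and $X \cap \Sing(\mathbb{P}(\rho)) = \varnothing$. Quasi-smoothness is assumed, so only the first and third conditions need proof, after which the reverse implication of that proposition yields that $X$ is smooth and well-formed. First I would restate the hypothesis on $F$ combinatorially. Non-contraction means $\vert F(I) \vert = \vert I \vert$ for every face $I \in \mathcal{S}(\rho)$, so $F$ restricts to a bijection $I \to F(I)$; the weighted-map condition of Definition~\ref{definition:WSC} applied to a vertex $\{i\}$ reads $a_i = \Phi_{\mathcal{S}(\rho)}(\{i\}) \mid \Phi_{\mathcal{S}(\mu)}(\{F(i)\}) = d_{F(i)}$. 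Thus to each vertex $i$ of $\mathcal{S}(\rho)$ the map $F$ attaches an index $F(i) \in \{1, \ldots, c\}$ with $a_i \mid d_{F(i)}$, and along any face these indices are pairwise distinct. Taking $J = F(I)$ already shows that $(\rho; \mu)$ is strictly regular in the sense of Definition~\ref{definition:strictly-regular-pair}, but the argument below uses the sharper divisibility $a_i \mid d_{F(i)}$ rather than mere representability of $d_{F(i)}$.

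Next I would prove that $\mathbb{P}(\rho)$ is well-formed. If it were not, then $\gcd(\{a_{i'} \mid i' \neq i_0\}) > 1$ for some index $i_0$, so $I_0 = \{0, \ldots, N\} \setminus \{i_0\}$ is a face of $\mathcal{S}(\rho)$ of cardinality $N$. Non-contraction forces $F(I_0)$ to be a face of $\mathcal{S}(\mu)$ of cardinality $N$; since every face of $\mathcal{S}(\mu)$ is a subset of $\{1, \ldots, c\}$, this yields $N \leqslant c$. In the relevant case $\dim X \geqslant 1$ one has $c = N - \dim X < N$, a contradiction, so $\mathbb{P}(\rho)$ is well-formed. (The boundary case $N = c$ is discussed at the end.)

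The core of the proof is the vanishing $X \cap \Sing(\mathbb{P}(\rho)) = \varnothing$. By Lemma~\ref{lemma:WPS-WF-singular-locus} it is enough to check $X \cap \mathbb{P}(\rho)_I = \varnothing$ for every facet $I$ of $\mathcal{S}(\rho)$, since $\Sing(\mathbb{P}(\rho))$ is the union of the strata $\mathbb{P}(\rho)_I$. Fix such a facet and set $j_i = F(i)$ for $i \in I$; these are $\vert I \vert$ pairwise distinct degrees with $a_i \mid d_{j_i}$. On the stratum $\mathbb{P}(\rho)_I$, which is a weighted projective space of dimension $\vert I \vert - 1$ on the weights $\{a_i \mid i \in I\}$, I would examine the $\vert I \vert$ restricted equations $f_{j_i}\vert_{\mathbb{P}(\rho)_I}$. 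The decisive observation is that, because $a_i \mid d_{j_i}$, the pure power $X_i^{d_{j_i}/a_i}$ is a monomial of degree $d_{j_i}$, and the common zero locus in $\mathbb{P}(\rho)_I$ of the $\vert I \vert$ monomials $\{X_i^{d_{j_i}/a_i} \mid i \in I\}$ is empty, since any point of $\mathbb{P}(\rho)_I$ has a nonzero coordinate $X_i$ with $i \in I$. Emptiness of the intersection of these $\vert I \vert$ hypersurfaces is a Zariski-open condition on their coefficients: the locus of coefficient tuples giving a nonempty intersection is the image, under the projection to the coefficient space, of the closed incidence subscheme inside $\mathbb{P}(\rho)_I \times (\text{coefficient space})$, and this projection is proper because $\mathbb{P}(\rho)_I$ is complete, so the locus is closed. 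As the condition holds at the monomial specialization, it holds for a general tuple $(f_{j_i})_{i \in I}$, hence for a general $X$; since $X \subseteq \bigcap_{i \in I} \{f_{j_i} = 0\}$, we obtain $X \cap \mathbb{P}(\rho)_I = \varnothing$. Intersecting the finitely many dense open conditions over all facets gives $X \cap \Sing(\mathbb{P}(\rho)) = \varnothing$ for general $X$, completing the verification.

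The step I expect to require the most care is the passage from the explicit monomial witness to genericity, carried out uniformly over the full coefficient space of $(f_1, \ldots, f_c)$: one must organize the whole argument as a finite intersection of nonempty Zariski-open conditions — quasi-smoothness, open by~\cite{pizzato/nonvanishing}, together with the emptiness conditions above — and check that specializing the chosen subfamily $(f_{j_i})_{i \in I}$ to monomials is compatible with keeping the entire tuple general. The other delicate point is the boundary case $\dim X = 0$, i.e.\ $N = c$, in which the cardinality count of the second paragraph no longer excludes a non-well-formed $\mathbb{P}(\rho)$; there one should either restrict to the positive-dimensional setting relevant to the applications or assume well-formedness of $\mathbb{P}(\rho)$ at the outset.
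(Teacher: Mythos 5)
Your proof is correct and follows essentially the same route as the paper's: both reduce via Proposition~\ref{proposition:WCI-smooth-WF-criterion} and Lemma~\ref{lemma:WPS-WF-singular-locus} to showing \(X \cap \mathbb{P}(\rho)_I = \varnothing\) on the singular strata, and both use exactly the same witness --- the pure powers \(X_i^{d_{F(i)}/a_i}\), available because \(a_i \mid d_{F(i)}\), with pairwise distinct degrees because \(F\) does not contract \(I\) --- to conclude for general \(X\). The only packaging difference in this core step is that the paper invokes semi-continuity of codimension (heights of ideals in the coordinate ring of the stratum), whereas you argue that emptiness is an open condition via properness of the projection from the incidence variety; these are the same mechanism, and your restriction to facets of \(\mathcal{S}(\rho)\) rather than all faces is a harmless simplification. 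Where you genuinely add something is the well-formedness of \(\mathbb{P}(\rho)\): the paper's proof silently works ``in a well-formed weighted projective space'' (and Lemma~\ref{lemma:WPS-WF-singular-locus} requires this), while you derive it from non-contraction --- if the \(\gcd\) of all weights but one exceeded \(1\), the complement of that index would be a face of cardinality \(N\), forcing \(N \leqslant c\), contradicting \(\dim X \geqslant 1\). That patch is valid, and your flagging of the zero-dimensional boundary case \(N = c\), where one should simply assume \(\mathbb{P}(\rho)\) well-formed at the outset (as the paper implicitly does), is the right call.
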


\begin{example}\label{example:realization}
  Let \(F \colon \mathcal{C} \twoheadrightarrow \mathcal{D}\) be any surjective map of finite abstract simplicial complexes which does not contract faces. We claim that \(F\) can be realized as a weighted simplicial map associated with a smooth well-formed weighted complete intersection. By Proposition~\ref{proposition:realization}, we can identify \(\mathcal{C}\) with a weighted simplicial complex \((\mathcal{S}(\rho), \Phi_{\mathcal{S}(\rho)})\) for some tuple \(\rho\) of positive integers. We can endow \(\mathcal{D}\) with a structure of a weighted simplicial complex \((\mathcal{S}(\mu), \Phi_{\mathcal{S}(\mu)})\) as follows: for any vertex we take the least common multiple of all weights lying in the fiber. If the fiber consists of only one weight, we take its arbitrary multiplicity to avoid an intersection with a linear cone (see Definition~\ref{definition:WCI-linear-cone}). A general weighted complete intersection \(X \subset \mathbb{P}(1^{(t)}; \rho)\) of multidegree \((\mu; \lcm(\rho)^{(s)})\) exists for any \(t > m + s\) by~\cite[Lemma~2.18]{ovcharenko/classification} and is quasi-smooth by~\cite[Proposition~3.1]{pizzato/nonvanishing} for \(s \gg 0\). Then it is smooth and well-formed by Proposition~\ref{proposition:inverse}.  
\end{example}

\section{Proof of main results}\label{section:proof}

In this section we follow the notations and definitions of Section~\ref{section:WSC}. We also use \emph{Noetherian induction}.

\begin{principle}[Noetherian induction, cf.~{\cite[Exercise~II.3.16]{hartshorne/geometry}}]
  Let \((X, \preccurlyeq)\) be a well-founded partially ordered set, i.e., such that every non-empty subset \(S \subseteq X\) has a minimal element with respect to \(\preccurlyeq\).

  Let \(P(x)\) be a property of elements \(x \in X\). To prove \(P(x)\) for all elements \(x \in X\), it suffices to prove
  \begin{itemize}\setlength{\itemindent}{2cm}
  \item [\textbf{Induction basis.}] \(P(x)\) is true for all minimal elements \(x \in X\).
  \item [\textbf{Induction step.}] For any non-minimal \(y \in X\) assume that \(P(x)\) is true for all \(x \prec y\). Then \(P(y)\) is true.  
  \end{itemize}
\end{principle}

\begin{proof}[Proof of Proposition~\ref{proposition:poset-map}]
  Put \(\rho = (a_0, \ldots, a_N)\) and \(\mu = (d_1, \ldots, d_c)\). We introduce the following notation:
  \[
    \Phi = \Phi_{\mathcal{S}(\rho)}, \quad
    \mathcal{S(\rho)}^{(b)} = \{i = 0, \ldots, N, \; b \mid a_i\}
    \text{ for any } b \in \im(\Phi).
  \]
  Note that we can reformulate~(2) of Proposition~\ref{proposition:poset-map} in these terms (see Remark~\ref{remark:strictly-regular-pair}). By Proposition~\ref{proposition:WCI-smooth-WF-combinatorics} for any element \(b \in \im(\Phi)\) there exists an injective simplicial map
  \[
    \chi^{(b)} \colon \mathcal{S}(\rho)^{(b)} \hookrightarrow
    \Vertices(\mathcal{S}(\mu)) = \{1, \ldots, c\}
  \]
  such that the face \(\mathcal{S}(\rho)^{(b)} \in \mathcal{S}(\rho)\) does not belong to the simplicial complex \(\mathcal{B}_{\rho}(d_j)\) for any vertex \(j \in \im(\chi^{(b)})\). We call the map \(\chi^{(b)} \colon \mathcal{S}(\rho)^{(b)} \hookrightarrow
    \Vertices(\mathcal{S}(\mu))\) an \emph{admissible injection} (cf.~(1) of Proposition~\ref{proposition:poset-map}).

  In order to prove the statement, we have to correspond to any \(b \in \im(\Phi)\) an admissible injection \(\chi^{(b)}\) in such a way that for any elements \(q_1, q_2 \in \im(\Phi)\) with \(q_1 \mid q_2\) the admissible injection \(\chi^{(q_2)}\) can be obtained as a restriction of \(\chi^{(q_1)}\). We are going to ensure this by Noetherian induction on elements of \(\im(\Phi)\) partially ordered by divisibility. Note that the fibers of the associated simplicial map \(\chi_{\vertices} \colon \mathcal{S}(\rho) \rightarrow \mathcal{S}(\mu)\) then would satisfy the following property: for any \(i_1, i_2 \in \chi_{\vertices}^{-1}(j)\) we have \(a_{i_1} \nmid a_{i_2}\) (cf.~(3) of Proposition~\ref{proposition:poset-map}).
   
  \textbf{Induction basis.} Let \(b \in \im(\Phi)\) be an element such that there exists no element \(q \in \im(\Phi) \setminus \{b\}\) with \(q \mid b\). By assumption there exists an admissible injection \(\chi^{(b)} \colon \mathcal{S}(\rho)^{(b)} \hookrightarrow \Vertices(\mathcal{S}(\mu))\), and we are done.

  \textbf{Induction step.} Let \(b \in \im(\Phi)\) be an element, and assume that we have constructed the required admissible injections \(\chi^{(q)} \colon \mathcal{S}(\rho)^{(q)} \hookrightarrow \Vertices(\mathcal{S}(\mu))\) for any element \(q \in \im(\Phi) \setminus \{b\}\) such that \(q \mid b\), and all of them agree in the sense above. Let \(\{b_1, \ldots, b_l\}\) be the set of all elements \(b_j \in \im(\Phi) \setminus \{b\}\) such that \(b_j \mid b\) such that there exists no element \(q \in \im(\Phi) \setminus \{b_j, b\}\) with \(b_j \mid q \mid b\). Let \(\chi^{(b_j)} \colon \mathcal{S}(\rho)^{(b_j)} \hookrightarrow \Vertices(\mathcal{S}(\mu))\) and \(\chi^{(b)} \colon \mathcal{S}(\rho)^{(b)} \hookrightarrow \Vertices(\mathcal{S}(\mu))\) be admissible injections. If necessary, we can always modify \(\chi^{(b_j)}\) to ensure that \(\im(\chi^{(b)}) \subset \im(\chi^{(b_j)})\) for any \(j = 1, \ldots, l\), since \(b_j \mid b\). Note that by induction assumption for any \(q \in \im(\Phi)\) with \(q \mid b_j\) the admissible injection \(\chi^{(b_j)}\) is the restriction of the admissible injection \(\chi^{(q)}\). Then a modification of \(\chi^{(b_j)}\) automatically yields the correct modification of any such \(\chi^{(q)}\) as well, and we are done.
\end{proof}

\begin{proof}[Proof of Theorem~\ref{theorem:main}]
  Let \(X \subset \mathbb{P}(\rho)\) be a smooth well-formed Fano weighted complete intersection of multidegree \(\mu = (d_1, \ldots, d_c)\), where we put \(\rho = (a_0, \ldots, a_N)\). From Proposition~\ref{proposition:poset-map} we obtain the weighted simplicial map \(\chi_{\vertices} \colon \mathcal{S}(\rho) \rightarrow \mathcal{S}(\mu)\) such that for any \(j \in \Vertices(\mathcal{S}(\mu)) = \{1, \ldots, c\}\) the fiber \(\chi_{\vertices}^{-1}(j)\) is a non-divisible subset of \(\{0, \ldots, N\}\) in the sense of Definition~\ref{definition:divisible-subset}. By assumption any non-divisible subset \(I \subset \{0, \ldots, N\}\) is strongly non-divisible. Moreover, \(\chi_{\vertices} \colon \mathcal{S}(\rho) \rightarrow \mathcal{S}(\mu)\) is a weighted simplicial map, hence \(a_i \mid d_j\) for any \(i \in \chi_{\vertices}^{-1}(j)\). Put \(\Delta_j = d_j - \sum_{i \in \chi_{\vertices}^{-1}(j)} a_i\). Then~\cite[Lemma~6.1]{pizzato/nonvanishing} implies that
  \[
    \Delta_j \geqslant \lcm_{i \in \chi_{\vertices}^{-1}(j)} a_i -
    \sum_{i \in \chi_{\vertices}^{-1}(j)} a_i \geqslant 0, \quad j = 1, \ldots, c.
  \]

  Let \(i_X = \deg(-K_X) > 0\) be Fano index of \(X\). Put \(U = \{i = 0, \ldots, N \mid a_i = 1\}\). The adjunction formula (see~{\cite[Theorem~3.3.4]{dolgachev/weighted}},~{\cite[\nopp 6.14]{ianofletcher/weighted}}) implies that \(\vert U \vert = i_X + \sum_{j = 1}^c \Delta_j\). Consider an arbitrary splitting \(U = U_0 \sqcup \ldots \sqcup U_c\) such that \(\vert U_0 \vert = i_X\), and \(\vert U_j \vert = \Delta_j\) for any \(j = 1, \ldots, c\). Put \(I_j = U_j \sqcup \chi_{\vertices}^{-1}(j)\) for any \(j = 1, \ldots, c\). Then \(I = U_0 \sqcup I_1 \sqcup \ldots \sqcup I_c\) is a required strong nef-partition for \(X\).
\end{proof}

\begin{proof}[Proof of Proposition~\ref{proposition:realization}]
  We are going to endow \(\mathcal{C}\) with a weight function \(\Phi_{\mathcal{C}}\) by Noetherian induction on faces of the simplicial complex \(\mathcal{C}\) partially ordered by reverse inclusion, starting from its facets. Then the tuple \(\rho\) can be obtained as values of the weight function \(\Phi_{\mathcal{C}}\) on vertices of the simplicial complex.

  \textbf{Induction basis.} Let \(\{F_1, \ldots, F_l\} \subset \mathcal{C}\) be the set of all facets of the simplicial complex \(\mathcal{C}\), and \((b_1, \ldots, b_l)\) be a tuple of pairwise coprime integers such that \(b_j > 1\). We put \(\Phi_{\mathcal{C}}(F_j) = b_j\) for any \(j = 1, \ldots, l\).
  
  \textbf{Induction step.} Let \(I \in \mathcal{C}\) be a face, and let \(\{I_1, \ldots, I_k\} \subset \mathcal{C}\) be the set of all faces of the complex \(\mathcal{C}\) such that for any \(j = 1, \ldots, k\) the face \(I_j\) contains \(I\) as its facet. Then we put \(\Phi_{\mathcal{C}}(I) = \lcm_{j = 1, \ldots, k}(\Phi_{\mathcal{C}}(I_j))\).
\end{proof}

\begin{proof}[Proof of Proposition~\ref{proposition:inverse}]
  Put \(\rho = (a_0, \ldots, a_N)\) and \(\mu = (d_1, \ldots, d_c)\). We know from Proposition~\ref{proposition:WCI-smooth-WF-criterion} that a weighted complete intersection \(X\) in a well-formed weighted projective space \(\mathbb{P}(\rho)\) is smooth and well-formed if and only it is quasi-smooth, and \(X \cap \Sing(\mathbb{P}(\rho)) = \varnothing\). Then Lemma~\ref{lemma:WPS-WF-singular-locus} implies that it is enough to check that we have \(X \cap \mathbb{P}(\rho)_I = \varnothing\) for any face \(I \in \mathcal{S}(\rho)\). Recall that by Definition~\ref{definition:coordinate-stratum} we have
  \[
    \mathbb{P}(\rho)_I = \Proj(R^{\rho} / \langle \{X_i \mid i \not \in I\} \rangle), \quad
    R^{\rho} = \mathbb{C}[X_0, \ldots, X_N] = \bigoplus_{n = 0}^{\infty} R^{\rho}_{d_j}, \quad
    \deg(X_i) = a_i,
  \]
  where we are using Notation~\ref{notation:polynomial-ring}. The weighted simplicial map \(F \colon \mathcal{S}(\rho) \rightarrow \mathcal{S}(\mu)\) sends a vertex \(i \in \Vertices(\mathcal{S}(\rho))\) to a vertex \(F(i) \in \Vertices(\mathcal{S}(\mu))\) such that \(a_i \mid d_{F(\{i\})}\). Then there exists a monomial \(X_i^{m_i} \in R^{\rho}\) of degree \(d_{F(\{i\})} = m_i a_i\). By assumption the map \(F\) does not contract any face \(I \in \mathcal{S}(\rho)\), so we have an injection
  \[
    F_I = \left. F \right|_I \colon I \hookrightarrow \Vertices(\mathcal{S}(\mu)) =
    \{1, \ldots, c\}, \quad
    a_i \mid d_{F_I(\{i\})} \text{ for all } i \in I. 
  \]
  In other words, there exists a regular sequence \((X_i^{m_i} \mid i \in I)\) of length \(\vert \im(F_I) \vert = \vert I \vert\), where \(m_i a_i = d_{F_I(\{i\})}\). Then by semi-continuity of codimension (see~\cite[Corollary~A.21]{ovcharenko/classification}) for a general choice of a sequence of homogeneous polynomials \(\overline{f} \in \prod_{j = 1}^c R^{\rho}_{d_j}\) the corresponding ideal \(I_{\overline{f}} \subset R^{\rho} / \langle \{X_i \mid i \not \in I\} \rangle\) has height \(\vert I \vert\). Consequently, for a general choice of \(X\) we have \(X \cap \mathbb{P}(\rho)_I = \varnothing\), so we are done.  
\end{proof}

\printbibliography

\end{document}